\documentclass{amsart}

\setlength{\textwidth}{\paperwidth}
\addtolength{\textwidth}{-2.8in}
\calclayout

\usepackage{amsmath}
\usepackage{amssymb}
\usepackage[all]{xy}
\usepackage{enumerate}
\setcounter{tocdepth}{1}
\usepackage{color}

\parskip=2ex

\def\Z{{\mathbb Z}}
\def\Q{{\mathbb Q}}

\def\P{{\mathbb P}}


\def\A{{\mathcal A}}

\def\cC{{\mathcal C}}

\def\cE{{\mathcal E}}

\def\cG{{\mathcal G}}

\def\M{{\mathcal M}}
\def\cN{{\mathcal N}}

\def\cP{{\mathcal P}}

\def\U{{\mathcal U}}



\def\G{\Gamma}


\def\d{{\mathfrak d}}
\def\g{{\mathfrak g}}

\def\n{{\mathfrak n}}
\def\p{{\mathfrak p}}
\def\r{{\mathfrak r}}

\def\u{{\mathfrak u}}





\def\etabar{{\overline{\eta}}}

\def\thetadual{\check{\theta}}


\def\Ql{{\Q_\ell}}
\def\Zl{{\Z_\ell}}



\def\Gm{{\mathbb{G}_m}}
\def\Sp{{\mathrm{Sp}}}

\def\GSp{{\mathrm{GSp}}}


\def\un{\mathrm{un}}

\def\arith{\mathrm{arith}}

\def\geom{\mathrm{geom}}

\def\ur{\mathrm{ur}}

\def\et{\mathrm{\acute{e}t}}

\def\nab{\mathrm{nab}}




\newcommand\id{\operatorname{id}}

\newcommand\Hom{\operatorname{Hom}}

\newcommand\Spec{\operatorname{Spec}}

\newcommand\Aut{\operatorname{Aut}}

\newcommand\Out{\operatorname{Out}}
\newcommand\Der{\operatorname{Der}}

\newcommand\Gr{\operatorname{Gr}}

\newcommand\Char{\operatorname{char}}


\newtheorem{theorem}{Theorem}[section]
\newtheorem{lemma}[theorem]{Lemma}
\newtheorem{proposition}[theorem]{Proposition}

\newtheorem{bigtheorem}{Theorem}
\newtheorem{bigcorollary}[bigtheorem]{Corollary}

\theoremstyle{definition}

\theoremstyle{remark}

\newtheorem{variant}[theorem]{Variant}


 \begin{document}
 	
\title{Non-abelian cohomology of universal curves in positive characteristic }

\author{Tatsunari Watanabe}
\address{Mathematics Department, Embry-Riddle Aeronautical University, 3700 Willow Creek Rd. Prescott, AZ 86301, USA}
\email{watanabt@erau.edu}

\maketitle
\begin{abstract}
\textcolor{black}{
\textcolor{black}{In this paper, we will compute the non-abelian cohomology of the universal complete curve in positive characteristic. 
This extends Hain’s result on the non-abelian cohomology of generic curves in characteristic zero to positive characteristics. 
Furthermore, we will prove that the exact sequence of etale fundamental groups of the universal $n$-punctured curve in positive characteristic does not split.}
}

\smallskip

\end{abstract}


\section{Introduction}
For a DM stack $X$ with a geometric point $\bar x$, denote the \'etale fundamental group of $X$ with base point $\bar x$ by $\pi_1(X, \bar x)$. Let $F$ be a field. Fix a separable closure $\overline{F}$ of $F$. Let $C$ be a geometrically connected smooth projective curve of genus $g$ over $F$ and $\bar x$ a geometric point of $C_{\overline{F}}:= C\otimes \overline{F}$. Associated to $C$, there is the homotopy exact sequence of fundamental groups
\begin{equation}\label{hom seq for galois grp}
 1 \to \pi_1(C_{\overline{F}}, \bar x)\to \pi_1(C, \bar x)\to G_F\to 1,
 \end{equation}
where $G_F$ is the Galois group of $\overline{F}$ over $F$. Each $F$-rational point $y$ of $C$ induces a section $s_y$ of the projection $\pi_1(C, \bar x)\to G_F$ that is unique up to conjugation by an element of $\pi_1(C_{\overline{F}}, \bar x)$. Grothendieck's section conjecture predicts that if $F$ is finitely generated over $\Q$ and $g\geq 2$, then there is a bijection between the set of $F$-rational points of $C$ and the set of $\pi_1(C_{\overline{F}}, \bar x)$-conjugacy classes of \textcolor{black}{continuous} sections of $\pi_1(C, \bar x)\to G_F$. 
Assume that $2g-2+n >0$. Denote the moduli stack of curves of type $(g, n)$  with an abelian level $m$ over a field $k$ by $\M_{g,n/k}[m]$.  Our main results are concerned with the universal curves over $\M_{g,n/k}[m]$, denoted by  $\pi:\cC_{g,n/k}[m]\to \M_{g,n/k}[m]$ and $\pi^o:\M_{g,n+1/k}[m]\to \M_{g,n/k}[m]$.  \textcolor{black}{The  universal \textcolor{black}{complete} curve $\pi$ is equipped with $n$ disjoint sections called the tautological sections. }\textcolor{black}{The  universal punctured curve $\pi^o$ is the restriction of $\pi$ to the complement of the $n$ tautological sections in $\cC_{g,n/k}[m]$.}
 One of the key ingredients used in \cite{hain2} and this paper is the weighted completion of a profinite group. For $g \geq 3$ or \textcolor{black}{$g=2$ and $n>2g+2$}, let $K$ be the function field $k(\M_{g,n/k}[m])$ of $\M_{g,n/k}[m]$.  Fix a separable closure $\overline{K}$ of $K$. Let $\etabar:\Spec \overline{K}\to \M_{g,n/k}[m]$ be a geometric generic point of $\M_{g,n/k}[m]$. \textcolor{black}{Denote the fibers of $\pi$ and $\pi^o$ over $\etabar$ by $C_\etabar$ and $C^o_\etabar$, respectively.} Let $\ell$ be a prime number distinct from $\mathrm{char}(k)$. Set  $H =H^1_\et(C_{\etabar}, \Ql(1))$.  
 There is a monodromy representation $\rho_{\etabar}:\pi_1(\M_{g,n/k}, \etabar) \to \GSp(H)$.   \textcolor{black}{Fix a geometric point $\bar x$ in $C^o_\etabar$ and so in $C_\etabar$.} Denote the continuous $\ell$-adic unipotent completions of $\pi_1(C_\etabar, \bar x)$  and \textcolor{black}{$\pi_1(C^o_\etabar, \bar x)$} by $\cP$ and \textcolor{black}{$\cP^o$}
 , respectively. 
 Then there are the exact sequences of proalgebraic $\Ql$-groups
\begin{equation}\label{main exact seq for p}
1\to \cP \to \cG_{\cC_{g,n}}[\ell^r]\to \cG_{g,n}[\ell^r]\to 1,
\end{equation}
and 
\begin{equation}\label{main exact seq for p open}
\textcolor{black}{1\to \cP^o \to \cG_{g, n+1}[\ell^r]\to \cG_{g,n}[\ell^r]\to 1. }
\end{equation}
where $r$ is a nonnegative integer and \textcolor{black}{$\cG_{\cC_{g,n}}[\ell^r]$, $\cG_{g,n+1}[\ell^r]$, and $\cG_{g,n}[\ell^r]$ are the weighted completions of $\pi_1(\cC_{g,n/k}[\ell^r], \bar x)$, $\pi_1(\M_{g,n+1/k}[\ell^r], \bar x)$, and $\pi_1(\M_{g,n/k}[\ell^r], \etabar)$ with respect to $\rho_\etabar\circ \pi_\ast$,  $\rho_\etabar\circ\pi^o_\ast$, and $\rho_\etabar$, respectively. }
\textcolor{black}{Pulling back the exact sequences   (\ref{main exact seq for p})  and (\ref{main exact seq for p open}) 
along $\tilde{\rho}_{\etabar}:\pi_1(\M_{g,n/k}[\ell^r], \etabar)\to \cG_{g,n}[\ell^r](\Ql)$ induced by weighted completion, we obtain extensions
\begin{equation*} \label{main exact seq}1 \to \cP(\Ql)\to \cE_{g,n} \to \pi_1(\M_{g,n/k}[\ell^r], \etabar)\to 1\end{equation*}
and 
\begin{equation*} \label{main exact seq open}1 \to \cP^o(\Ql)\to \cE^o_{g,n+1} \to \pi_1(\M_{g,n/k}[\ell^r], \etabar)\to 1\end{equation*}
of $\pi_1(\M_{g,n/k}[\ell^r], \etabar)$ by $\cP(\Ql)$ and $\cP^o(\Ql)$, respectively.}
Denote the set of $\cP(\Ql)$-conjugacy classes of continuous sections of $\cE_{g,n}\to  \pi_1(\M_{g,n/k}[\ell^r], \etabar)$  by $H^1_\nab( \pi_1(\M_{g,n/k}[\ell^r], \etabar), \cP(\Ql))$. \textcolor{black}{Similarly, we define $H^1_\nab( \pi_1(\M_{g,n/k}[\ell^r], \etabar), \cP^o(\Ql))$ as the $\cP^o(\Ql)$-conjugacy classes of continuous sections of $\cE^o_{g,n+1}\to \pi_1(\M_{g,n/k}[\ell^r], \etabar)$. }
The non-abelian cohomology of extensions of a profinite group by a prounipotent group was introduced and developed by Kim in \cite{kim}. 
Each of the $n$ tautological sections of  $\pi$ induces a class  in $H^1_\nab( \pi_1(\M_{g,n/k}[\ell^r], \etabar), \cP(\Ql))$, which we denote by $s^\un_j$ for $j=1,\ldots,n$. 
\begin{bigtheorem}\label{nonopen case}
\textcolor{black}{Suppose that $p$ is a prime number, that $\ell$ is a prime number distinct from $p$, and that $r$ is a nonnegative integer. 
Let $k$ be a finite field with $\Char(k) =p$ that contains all $\ell^r$th roots of unity. }
If $g\geq 4$ and $n\geq 1$, then we have
$$H^1_\nab( \pi_1(\M_{g,n/k}[\ell^r], \etabar), \cP(\Ql)) = 
\{s_1^\un, \ldots,s_n^\un\}.$$
\end{bigtheorem}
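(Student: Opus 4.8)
The plan is to compute $H^1_\nab$ by filtering the prounipotent group $\cP$ by its lower central series $\cP = \cP^{(1)} \supseteq \cP^{(2)} \supseteq \cdots$ and analyzing the resulting tower of central extensions one graded piece at a time. Write $\Gamma = \pi_1(\M_{g,n/k}[\ell^r], \etabar)$ and $\p = \mathrm{Lie}\,\cP$, so that the associated graded $\Gr^\bullet\p$ is a pronilpotent Lie algebra in the category of $\GSp(H)$-representations, with $\Gr^1\p = H$ and each $\Gr^m\p$ pure of weight $-m$. Each layer is a central extension $1 \to \Gr^m\p \to \cP/\cP^{(m+1)} \to \cP/\cP^{(m)} \to 1$, and pulling back along $\Gamma$ gives, by Kim's non-abelian cohomology formalism in \cite{kim}, an exact sequence of pointed sets
\[
H^1(\Gamma, \Gr^m\p) \to H^1_\nab(\Gamma, \cP/\cP^{(m+1)}) \to H^1_\nab(\Gamma, \cP/\cP^{(m)}) \xrightarrow{\ \delta_m\ } H^2(\Gamma, \Gr^m\p),
\]
in which $\delta_m$ is the obstruction to lifting a class by one step and, over a liftable class, the fibre is a quotient of $H^1(\Gamma, \Gr^m\p)$. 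Since $\cP = \varprojlim \cP/\cP^{(m)}$, the set $H^1_\nab(\Gamma, \cP(\Ql))$ is the inverse limit of these stages, so the whole computation reduces to understanding the abelian groups $H^1(\Gamma, \Gr^m\p)$ and $H^2(\Gamma, \Gr^m\p)$ together with the maps $\delta_m$.

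The heart of the argument is the following package of cohomological facts, which I would establish in turn. First, that $H^1(\Gamma, \Gr^m\p) = 0$ for all $m \geq 2$; this makes every lift beyond the first stage unique, so the whole tower is controlled by the bottom layer together with the obstructions. Second, that $H^1(\Gamma, H)$ is exactly the $n$-dimensional space spanned by the abelianizations of the tautological classes $s_1^\un, \ldots, s_n^\un$. Third, and most delicately, that a class $c \in H^1(\Gamma, H)$ lifts through the entire tower only if $c$ is one of the $s_j^\un$ — equivalently, that the quadratic obstruction $\delta_2(c)$, the image of the cup product $c \cup c$ under the bracket $\Lambda^2 H \to \Gr^2\p$, together with the subsequent $\delta_m$ should it vanish, is non-trivial for every $c$ outside the tautological classes. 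The tautological classes themselves lift automatically, since the tautological sections of $\pi$ furnish honest elements of $H^1_\nab(\Gamma, \cP(\Ql))$. Granting the three facts, every element of $H^1_\nab(\Gamma, \cP(\Ql))$ maps to a liftable class in $H^1(\Gamma, H)$, the liftable classes are exactly the $n$ tautological ones, and uniqueness of lifts identifies each with a single element; this yields the asserted equality.

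To compute these groups in positive characteristic I would pass to the geometric fundamental group via the Hochschild–Serre spectral sequence for $1 \to \Gamma^\geom \to \Gamma \to G_k \to 1$, where $\Gamma^\geom = \pi_1(\M_{g,n/\kbar}[\ell^r], \etabar)$ and $G_k = \Gal(\kbar/k) = \widehat{\Z}$ has cohomological dimension one. Since the monodromy is Zariski dense in $\Sp(H)$ and $\Gr^m\p$ is pure of weight $-m$, the geometric invariants $(\Gr^m\p)^{\Gamma^\geom}$ have nonzero weight and contribute nothing to $G_k$-cohomology; the spectral sequence then identifies $H^1(\Gamma, \Gr^m\p)$ with the Frobenius invariants $H^1(\Gamma^\geom, \Gr^m\p)^{G_k}$ and controls $H^2(\Gamma, \Gr^m\p)$ through the invariants and coinvariants of $H^1(\Gamma^\geom, \Gr^m\p)$ and $H^2(\Gamma^\geom, \Gr^m\p)$. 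The geometric cohomology, being étale cohomology of $\M_{g,n/\kbar}[\ell^r]$ with coefficients in the local system $\Gr^m\p$, I would compute by reduction to characteristic zero: the moduli stack and these symplectic local systems are defined over a localization of $\Z$, so smooth base change lets me transport Hain's characteristic-zero computation from \cite{hain2}, while Deligne's purity (Weil II) pins down the Frobenius weights and hence the $G_k$-invariants. The hypothesis $g \geq 4$ enters precisely here, securing the stable-range vanishing of $H^1$ and the control of $H^2$ with these symplectic coefficients that underlie facts one through three.

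The main obstacle I anticipate is the third fact — the rigidity isolating the tautological classes. In Hain's characteristic-zero setting this is enforced by the strictness of morphisms of mixed Hodge structures; that tool is unavailable here and must be replaced by an argument purely in terms of Frobenius weights and the $\GSp(H)$-equivariant structure of $\Gr^\bullet\p$. Concretely, the difficulty is to show that the obstruction maps $\delta_m$, expressed through the brackets of $\Gr^\bullet\p$ and cup products in $H^\bullet(\Gamma, -)$, cut $H^1(\Gamma, H)$ down to exactly the finite set of tautological classes rather than to a positive-dimensional locus; this is where the precise ring structure of the cohomology of $\M_{g,n}$ with symplectic coefficients, and the disjointness of the tautological sections, must be used. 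A secondary but necessary point is to verify that $s_1^\un, \ldots, s_n^\un$ are genuinely distinct, which I would check already at the abelian level through their images in $H^1(\Gamma, H)$.
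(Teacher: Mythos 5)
Your skeleton --- the lower-central-series tower, Kim's exact sequences of pointed sets, and the passage to the inverse limit --- is essentially the same as the paper's, and your Facts 1 and 2 are correct: they amount to Proposition \ref{condition for existence}, which the paper obtains not from Hochschild--Serre and Weil II on the moduli stack but from the weighted-completion identity $H^1(\g_{g,n}[\ell^r],V)\cong\Hom_{\GSp(H)}(H_1(\u_{g,n}[\ell^r]),V)$ together with purity of $H_1(\u_{g,n}[\ell^r])$ in weight $-1$ (itself a consequence of the surjection $\u^\geom_{g,n}[\ell^r]\to\u_{g,n}[\ell^r]$ and Hain's computation of $H_1(\u^\geom_{g,n})$). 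The genuine gap is your Fact 3. The assertion that the obstruction maps cut the $n$-dimensional space $H^1(\G^\arith_{g,n}[\ell^r],H)$ down to exactly the $n$ tautological points is the entire mathematical content of the theorem, and your proposal does not prove it: you explicitly defer it, and the substitute you suggest --- quadratic obstructions expressed through cup products and ``the precise ring structure of the cohomology of $\M_{g,n}$ with symplectic coefficients'' --- would require computing $H^2$ of the arithmetic fundamental group with coefficients in $\Lambda^2_0H$ and an explicit formula for $\delta_2$ in positive characteristic, neither of which you have and neither of which the paper ever needs. (A secondary problem: $\M_{g,n}$ is not proper over $\Z[1/\ell]$, so smooth base change alone does not transport these cohomology groups from characteristic zero; the paper compares fundamental groups by specialization instead.)

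What is missing is the paper's linearization step, which turns your Fact 3 into a finite representation-theoretic question instead of an obstruction-theoretic one. Propositions \ref{non-abelian coh iso} and \ref{nonabelian cohomology} identify $H^1_\nab(\G^\arith_{g,n}[\ell^r],\cP/W_N\cP(\Ql))$ with the $\Ql$-points of Hain's non-abelian cohomology \emph{scheme} of the weighted completion, and the latter with the set of $\GSp(H)$-equivariant graded Lie algebra sections of $\Gr^W_\bullet\g_{\cC_{g,n}}[\ell^r]/W_N\to\Gr^W_\bullet\g_{g,n}[\ell^r]$. At the decisive stage $N=-3$ this is precisely the set of sections of the two-step surjection $\d(\pi):\d_{\cC_{g,n}}\to\d_{g,n}$; by the comparison isomorphisms (Propositions \ref{comparison} and \ref{iso in weight -1 and -2}) these graded Lie algebras agree with their characteristic-zero counterparts, so Hain's classification (Proposition \ref{sections of dpi}) applies verbatim and yields exactly $\d(s_1),\ldots,\d(s_n)$ for $g\geq4$, $n\geq1$. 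No $H^2$, no Frobenius-weight analysis of stack cohomology, and no cup-product formula enter anywhere. From that point your own Fact 1 (in its Lie algebra form) gives uniqueness of all higher lifts, liftability of the $s_j^\un$ is automatic because they come from genuine sections, and induction plus the inverse limit close the proof. Without this linearization, or an independent proof of Fact 3, your argument only shows that the tautological classes inject into $H^1_\nab$ and that $H^1_\nab$ injects into $H^1(\G^\arith_{g,n}[\ell^r],H)\cong\Ql^n$; the claimed equality is exactly the part left unproved.
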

\textcolor{black}{The case where $n=0$ directly follows from \cite[Prop.~11.3(ii)]{wat1}. In this case, the non-abelian cohomology of $\pi_1(\M_{g/k}[\ell^r], \etabar)$ is empty. For the universal punctured curve $\pi^o:\M_{g,n+1/k}[\ell^r]\to \M_{g,n/k}[\ell^r]$, we have the following result. }
\begin{bigtheorem}\label{open case}
\textcolor{black}{Suppose that $p$ is a prime number, that $\ell$ is a prime number distinct from $p$, and that $r$ is a nonnegative integer. 
Let $k$ be a finite field with $\Char(k) =p$ that contains all $\ell^r$th roots of unity. }
If $g\geq 4$ and $n\geq \textcolor{black}{1}$, then the sequence 
$$
1\to \cP^o \to \cG_{g, n+1}[\ell^r]\to \cG_{g,n}[\ell^r]\to 1
$$
does not split. Consequently, we have
$$
H^1_\nab( \pi_1(\M_{g,n/k}[\ell^r], \etabar), \cP^o(\Ql)) = \emptyset.
$$
\end{bigtheorem}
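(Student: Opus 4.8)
The plan is to deduce Theorem \ref{open case} from Theorem \ref{nonopen case} by comparing the punctured sequence with the complete one and then isolating a weight $-2$ obstruction. First I would make precise the word ``Consequently.'' A continuous section of $\cE^o_{g,n+1}\to \pi_1(\M_{g,n/k}[\ell^r],\etabar)$ is the same as a continuous homomorphism $\pi_1(\M_{g,n/k}[\ell^r],\etabar)\to \cG_{g,n+1}[\ell^r](\Ql)$ lifting $\tilde\rho_\etabar$. Since $\cG_{g,n+1}[\ell^r]$ is a negatively weighted extension of $\GSp(H)$ and its composite with $\cG_{g,n+1}[\ell^r]\to\GSp(H)$ is again $\rho_\etabar$, the universal property of the weighted completion turns such a lift into a homomorphism of proalgebraic groups $\cG_{g,n}[\ell^r]\to \cG_{g,n+1}[\ell^r]$ splitting the sequence; conversely any proalgebraic splitting gives a continuous section after composition with $\tilde\rho_\etabar$, using that $\pi_1(\M_{g,n/k}[\ell^r],\etabar)$ is Zariski dense in $\cG_{g,n}[\ell^r]$. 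Hence the non-abelian cohomology set is nonempty if and only if the sequence splits, and it suffices to prove non-splitting.

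Next, the open immersion $\M_{g,n+1/k}[\ell^r]\hookrightarrow \cC_{g,n/k}[\ell^r]$ realizing the punctured curve as the complement of the $n$ tautological sections induces a morphism of extensions of $\cG_{g,n}[\ell^r]$, from the punctured sequence to the complete sequence (\ref{main exact seq for p}), which is the identity on the base, restricts on fibers to the map $q\colon \cP^o\to\cP$ coming from $\pi_1(C^o_\etabar,\bar x)\twoheadrightarrow\pi_1(C_\etabar,\bar x)$, and is surjective because that map is. Consequently, if the punctured sequence split via $s\colon \cG_{g,n}[\ell^r]\to \cG_{g,n+1}[\ell^r]$, then pushing $s$ forward along this morphism would split the complete sequence, and by Theorem \ref{nonopen case} the resulting class would equal $s_j^\un$ for some $j\in\{1,\dots,n\}$. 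So it is enough to show that none of the tautological classes $s_j^\un$ lifts through $q$.

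Fix $j$ and a representative section $\sigma_j$ of $s_j^\un$; lifts of $\sigma_j$ to $\cG_{g,n+1}[\ell^r]$ correspond to splittings of the extension $1\to \K\to P_j\to \cG_{g,n}[\ell^r]\to 1$ obtained by pulling the punctured sequence back along $\sigma_j$, where $\K=\ker q$. Because $q$ is surjective, this lifting problem is insensitive to replacing $\sigma_j$ by a $\cP$-conjugate, so it is well defined on the class $s_j^\un$. The group $\K$ is prounipotent with weight filtration starting in weight $-2$, and a short computation identifies $\Gr^W_{-2}\K\cong\bigoplus_{i=1}^{n}\Ql(1)\gamma_i$, spanned by the loops $\gamma_i$ around the punctures, which are precisely the classes that die in $\cP$. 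Splitting $P_j$ weight by weight, the primary obstruction lives in $H^2(\cG_{g,n}[\ell^r],\Gr^W_{-2}\K)$, and I would show that its component in the summand $\Ql(1)\gamma_j$ is the first Chern class (psi-class) $\psi_j\in H^2(\cG_{g,n}[\ell^r],\Ql(1))$ of the relative cotangent line along the $j$-th tautological section. The geometry behind this is that $s_j^\un$ points at the deleted divisor $D_j=\im(x_j)$, so lifting it into the complement amounts to a nowhere-zero section of the normal line bundle of $D_j$, whose obstruction is exactly the Euler class $\psi_j$.

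The main obstacle is this last step: identifying the weight $-2$ obstruction with $\psi_j$ and proving $\psi_j\ne 0$ in the continuous cohomology $H^2(\cG_{g,n}[\ell^r],\Ql(1))$ of the weighted completion. This is where the hypotheses enter: the range $g\ge 4$ guarantees, via the cohomology computations of \cite{wat1}, that the low-degree cohomology of $\cG_{g,n}[\ell^r]$ with Tate coefficients matches the \'etale cohomology of $\M_{g,n/k}[\ell^r]$, in which $\psi_j$ is nonzero, while the assumption that $k$ is finite and contains the $\ell^r$-th roots of unity is what keeps the Tate-twisted coefficients from spuriously killing the class along the Galois direction. Granting $\psi_j\ne 0$, no $s_j^\un$ lifts, contradicting the previous paragraph; hence the punctured sequence does not split, and by the first step $H^1_\nab(\pi_1(\M_{g,n/k}[\ell^r],\etabar),\cP^o(\Ql))=\emptyset$.
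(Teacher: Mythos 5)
Your handling of the ``Consequently'' step matches the paper's (continuous section $\Rightarrow$ proalgebraic splitting via the universal property, after replacing the target by the Zariski closure of the image), and your main reduction is legitimate but genuinely different from the paper's. The paper never invokes Theorem \ref{nonopen case}: it uses Proposition \ref{exact seq for punctured universal curve} to identify $\cG^\geom_{g,n+1}[\ell^r]$ with the fiber product of $\cG_{g,n+1}[\ell^r]$ and $\cG^\geom_{g,n}[\ell^r]$ over $\cG_{g,n}[\ell^r]$, so that a splitting of the arithmetic sequence would split the geometric sequence $1\to\cP^o\to\cG^\geom_{g,n+1}[\ell^r]\to\cG^\geom_{g,n}[\ell^r]\to1$, and the latter is ruled out by the characteristic-zero comparison (Proposition \ref{comparison}) together with \cite[Thm.~1]{wat2}. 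Your route instead pushes a hypothetical splitting forward along $\cG_{g,n+1}[\ell^r]\to\cG_{\cC_{g,n}}[\ell^r]$, uses Theorem \ref{nonopen case} to identify the resulting class with some $s_j^\un$ (the upgrade from conjugacy of continuous sections to equality of proalgebraic sections up to $\cP(\Ql)$-conjugation is fine by Zariski density of the image of $\tilde{\rho}_\etabar$, and surjectivity of $q$ lets you transport the conjugating element), and reduces to showing that no tautological section lifts through $q$; your computation $\Gr^W_{-2}\K\cong\Ql(1)^{\oplus n}$, spanned by the puncture loops, is also correct.

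The genuine gap is exactly the step you flag, and it is not a finishing detail but the entire analytic content of the theorem: (a) the identification, in the prounipotent setting, of the primary (weight $-2$) obstruction with the Euler class of the divisor $D_j$, and (b) its non-vanishing. For (b), the support you cite does not exist: \cite{wat1} supplies $H^1(\g_{g,n}[\ell^r],\cdot)$-type computations (as in Proposition \ref{condition for existence}), not $H^2(\cG_{g,n}[\ell^r],\Ql(1))\cong\Hom_{\GSp(H)}(H_2(\u_{g,n}[\ell^r]),\Ql(1))$, and no computation of $H_2(\u_{g,n}[\ell^r])$ over a finite field appears in the paper or its references; there is no theorem saying the low-degree cohomology of the weighted completion ``matches'' \'etale cohomology of $\M_{g,n/k}[\ell^r]$. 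Note also that requiring $\psi_j\neq 0$ in $H^2$ of the completion overshoots what you need: it would suffice that the image of the obstruction under $H^2(\cG_{g,n}[\ell^r],\Ql(1))\to H^2_\cts(\G^\arith_{g,n}[\ell^r],\Ql(1))\to H^2_\et(\M_{g,n/k}[\ell^r],\Ql(1))$ equals $\pm\psi_j\neq 0$; but then you must prove the compatibility of the group-theoretic obstruction with the \'etale Euler class, and the non-vanishing of $\psi_j$ on $\M_{g,n/\bar k}[\ell^r]$ in characteristic $p$ (which itself needs an argument, e.g. complete test curves or specialization from characteristic zero). None of this is carried out, and it is precisely the difficulty the paper sidesteps by reducing to the geometric completion and importing the characteristic-zero non-splitting from \cite{wat2}. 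So the strategy is plausible, but as written the key step is asserted rather than proved.
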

\textcolor{black}{There is the homotopy exact sequence associated to the universal punctured curve $\pi^o$
\begin{equation}\label{homotopy seq for univ punc curve}
1\to \pi_1(C^o_\etabar, \bar x)^{(\ell)}\to\pi_1'(\M_{g,n+1/ k}[\ell^r],\bar x )\to\pi_1(\M_{g,n/ k}[\ell^r], \etabar)\to1,
\end{equation}
where $\pi_1(C^o_\etabar, \bar x)^{(\ell)}$ is the pro-$\ell$ completion of $\pi_1(C^o_\etabar, \bar x)$ and the middle group is the quotient of $\pi_1(\M_{g,n+1/ k}[\ell^r],\bar x )$ by a certain distinguished subgroup (see \cite[SGA 1, Expos{\'e} XIII, \S4]{sga1} and Prop.~\ref{exact seq for punctured universal curve}). Since the sequence (\ref{main exact seq for p open}) is induced from the sequence (\ref{homotopy seq for univ punc curve})  by  weighted completion and a splitting of  (\ref{homotopy seq for univ punc curve}) induces that of (\ref{main exact seq for p open}) by a universal property of weighted completion, we have an immediate consequence. 
\begin{bigcorollary}If $g \geq 4$ and $n \geq 1$, then the sequence (\ref{homotopy seq for univ punc curve}) does not split. 
\end{bigcorollary}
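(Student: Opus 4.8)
The plan is to obtain the Corollary from Theorem~\ref{open case} by contraposition, using the functoriality of weighted completion. First I would suppose, toward a contradiction, that the homotopy exact sequence (\ref{homotopy seq for univ punc curve}) splits, so that there is a continuous section $s:\pi_1(\M_{g,n/k}[\ell^r], \etabar)\to \pi_1'(\M_{g,n+1/k}[\ell^r], \bar x)$ of the projection $q:\pi_1'(\M_{g,n+1/k}[\ell^r], \bar x)\to \pi_1(\M_{g,n/k}[\ell^r], \etabar)$, i.e.\ $q\circ s=\id$. The reference representation on the middle term is $\rho_\etabar\circ q$, and since $\rho_\etabar\circ q\circ s=\rho_\etabar$, the section $s$ is a morphism of pairs compatible with the representations to $\GSp(H)$ used to form the weighted completions.

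Next I would apply the weighted completion functor to the whole sequence. As recalled in the introduction, applying weighted completion (with respect to $\rho_\etabar\circ\pi^o_\ast$ and $\rho_\etabar$) to (\ref{homotopy seq for univ punc curve}) yields the sequence (\ref{main exact seq for p open}); in particular $q$ induces the projection $\cG_{g,n+1}[\ell^r]\to\cG_{g,n}[\ell^r]$. Here one uses that the distinguished subgroup killed in passing from $\pi_1(\M_{g,n+1/k}[\ell^r],\bar x)$ to $\pi_1'(\M_{g,n+1/k}[\ell^r],\bar x)$ lies in the geometric fiber group and dies in the $\ell$-adic unipotent completion $\cP^o$, so the weighted completion of $\pi_1'(\M_{g,n+1/k}[\ell^r],\bar x)$ is again $\cG_{g,n+1}[\ell^r]$. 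By functoriality the section $s$ induces a morphism of proalgebraic $\Ql$-groups $s_\ast:\cG_{g,n}[\ell^r]\to\cG_{g,n+1}[\ell^r]$ satisfying $q_\ast\circ s_\ast=(q\circ s)_\ast=\id_{\cG_{g,n}[\ell^r]}$, so $s_\ast$ is a splitting of (\ref{main exact seq for p open}).

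This contradicts Theorem~\ref{open case}, which asserts that (\ref{main exact seq for p open}) does not split for $g\geq 4$ and $n\geq 1$. Hence no continuous section $s$ can exist, and the sequence (\ref{homotopy seq for univ punc curve}) does not split, which is the assertion of the Corollary. I expect the only step requiring genuine care to be the identification carried out in the second paragraph: that weighted completion sends (\ref{homotopy seq for univ punc curve}) to (\ref{main exact seq for p open}) and, in particular, that the section $s$ is compatible with the reference representations so that the functor applies to it directly. Once this compatibility and the vanishing of the distinguished subgroup in $\cP^o$ are in place, the conclusion is a formal consequence of the universal property of weighted completion.
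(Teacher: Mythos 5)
Your proposal is correct and is essentially the paper's own argument: the paper deduces the corollary from Theorem \ref{open case} by observing that the sequence (\ref{main exact seq for p open}) is induced from (\ref{homotopy seq for univ punc curve}) by weighted completion, so that a continuous splitting of the profinite sequence would, via the universal property of weighted completion, yield a splitting of the proalgebraic one. Your extra care about the distinguished subgroup $W$ dying in $\cP^o$ (so that the completion homomorphism factors through $\pi_1'(\M_{g,n+1/k}[\ell^r],\bar x)$ and its weighted completion is again $\cG_{g,n+1}[\ell^r]$) is precisely the justification implicit in the paper's phrase ``induced \dots by weighted completion.''
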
}
\textcolor{black}{
Theorem \ref{nonopen case} can be viewed as an analogue of Hain's result on generic curves in characteristic zero \cite[Thm.~3]{hain2} for the universal complete curve in positive characteristic. On the other hand, Theorem 2 is the positive characteristic analogue of the author's result in \cite[Thm.~1]{wat2}.   In \cite{wat1}, the author proves that the rational points of the universal complete  curve in positive characteristic are given by exactly the tautological sections and that the exact sequence (\ref{hom seq for galois grp}) associated to the function field $K$ of $\M_{g/k}[\ell^r]$ does not split when $n =0$. From the \textcolor{black}{point of }view of anabelian geometry, our main results provide more evidence for the bijection between the set of the rational points of the universal complete  curve and the set of $\pi_1(C_\etabar, \bar x)$-conjugacy classes of continuous sections of $\pi_1(\cC_{g,n/k}[\ell^r], \bar x)\to \pi_1(\M_{g,n/k}[\ell^r], \etabar)$.
The main new ingredients used in this paper are the non-abelian cohomology of extensions of a profinite group by a prounipotent group introduced by Kim in \cite{kim} and  the non-abelian cohomology schemes of weighted completions developed by Hain in \cite{hain4}. In our case where $k$ is a finite field, Proposition \ref{condition for existence} allows us to use a key exact sequence for the non-abelian cohomology schemes to compute the non-abelian cohomology of  $\pi_1(\M_{g,n/k}[\ell^r], \etabar)$. }
\section{The universal curve of type $(g, n)$ and level structures}
Let  $T$ be a scheme. By a curve of type $(g,n)$ over $T$, we mean a smooth proper morphism $f:C\to T$  whose geometric fibers are connected one-dimensional schemes of arithmetic genus g, that is equipped with $n$ disjoint sections. For nonnegative integers $g$ \textcolor{black}{and} $n$ satisfying $2g-2+n>0$, we have the smooth Deligne-Mumford (DM) stack $\M_{g,n}$ over $\Spec \Z$ classifying curves of type $(g, n)$. For a field $k$, the stack $\M_{g,n/k}$ is the base change $\M_{g,n}\otimes k$. The universal curve $\pi:\cC_{g,n}\to \M_{g,n}$ exists and satisfies the property that for a curve $f:C\to T$ of type $(g, n)$, there exists a unique morphism $\psi_f:T\to \M_{g,n}$ such that $f$ is the pullback of $\pi$ along $\psi_f$. \\
\indent  Assume that $2g-2+n >0$. Let $m$ be a positive integer. Let $k$ be a field containing all $m$th roots of unity $\mu_m(\bar k)$ \textcolor{black}{with $(\Char(k), m) =1$}. Fix an isomorphism $\mu: \mu_m^{\otimes-1}\cong \Z/m\Z$. For a curve $f:C\to T$ of type $(g, n)$, an abelian level $m$ on $f$ is an isomorphism $\phi:R^1f_\ast(\Z/m\Z) \cong (\Z/m\Z)^{2g}$ such that the diagram
$$\xymatrix@C=1pc @R=1pc{
		\Lambda^2R^1f_\ast(\Z/m\Z) \ar[r]^-{\mathrm{cup}}\ar[d]_{\Lambda^2\phi}&  R^2f_\ast(\Z/m\Z)\cong \mu_m^{\otimes-1}\ar[d]^{\mu}\\
		\Lambda^2(\Z/m\Z)^{2g} \ar[r]& \Z/m\Z
	}$$
commutes, where $R^1f_\ast(\Z/m\Z)$  is equipped with a symplectic structure via the cup product and $(\Z/m\Z)^{2g}$ is equipped with the standard symplectic structure.  The moduli stack of curves of type $(g,n)$ with an abelian level $m$ over $k$ is denoted by $\M_{g, n/k}[m]$.  \textcolor{black}{It is a geometrically connected, finite \'etale  cover of $\M_{g,n/k}$ (see \cite{DM}).
 } When $m \geq 3$, it is a smooth scheme over $k$.  In this paper, we always assume that $k$ contains all $m$th roots of unity. \textcolor{black}{For $m =1$, we denote $\M_{g,n/k}[1]$ by $\M_{g,n/k}$. }
\section{Weighted Completion and its Application to the Universal Curves}\label{weight}
\subsection{The weighted completion of a profinite group}
The weighted completion of a profinite group is introduced and developed by Hain and Matsumoto.  A detailed introduction of the theory and properties are included in \cite{wei}. Here, we briefly review the definition and list some key properties needed in this paper. 
Let $F$ be a field of characteristic zero. Suppose that $\G$ is a profinite group, $R$ is a reductive group over $F$, $\omega: \Gm\to R$ is a nontrivial central cocharacter, and that $\rho:\G\to R(F)$ is a continuous Zariski-dense homomorphism.
%
A negatively weighted extension of $R$ is an (pro)algebraic group $G$ that is an extension of $R$ by a (pro)unipotent group $U$ over $F$, $1\to U\to G\to R\to 1$,
such that $H_1(U)$ admits only negative weights as a $\Gm$-representation via the central cocharacter $\omega$.
The {\it weighted completion} of $\G$ with respect to $\rho$ and $\omega$ consists of a proalgebraic $F$-group $\cG$ that is a negatively weighted extension of $R$ and a homomorphism $\tilde{\rho}:\G\to\cG(F)$ lifting $\rho$ that admits a universal property: if $G$ is a negatively weighted extension of $R$ and there is a Zariski-dense homomorphism $\rho_G:\G\to G(F)$ lifting $\rho$, then there is a unique morphism $\phi_G:\cG\to G$ such that 
$$\rho_G=\phi_G\circ \tilde{\rho}.$$
Denote the prounipotent radical of $\cG$ by $\U$. \textcolor{black}{Since $\U$ is prounipotent, by a generalization of Levi's Theorem, the extension $1\to \U\to \cG\to R\to 1$ splits, and any two splittings are conjugate by an element of $\U(F)$.}
Furthermore, there is a natural weight filtration $W_\bullet M$ on a finite dimensional $\cG$-module $M$. The following is the list of the key properties.
\begin{proposition}[{\cite[\textcolor{black}{Prop.~3.8, Thms~3.9 \& 3.12 }]{wei}}]\label{weight filt} With the notation as above, the weight filtration $W_\bullet M$ satisfies the properties:
\begin{enumerate}
\item The defining $\cG$-action preserves the weight filtration.
\item The $\cG$-action on each associated graded quotient $\Gr^W_nM$ factors through $R$.
\item The functors $M\mapsto \Gr_\bullet M$, $M\mapsto W_mM$, and $M\mapsto M/W_mM$ are exact on the category of finite dimensional $\cG$-modules.
\end{enumerate}

\end{proposition}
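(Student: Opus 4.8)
The plan is to realize $W_\bullet M$ concretely as the filtration coming from the $\Gm$-weight grading attached to $\omega$, and then to deduce all three properties from the \emph{negatively weighted} hypothesis together with the semisimplicity of $\Gm$-representations in characteristic zero. Concretely, I would first fix a Levi splitting $s\colon R\to\cG$ of $1\to\U\to\cG\to R\to 1$, which exists by the generalization of Levi's theorem recalled above, and form the lift $\tilde\omega:=s\circ\omega\colon\Gm\to\cG$. For a finite-dimensional $\cG$-module $M$ this gives a $\Gm$-action; as $F$ has characteristic zero the action is semisimple, producing a weight decomposition $M=\bigoplus_{n\in\Z}M_n$ with $\Gm$ acting on $M_n$ by $t\mapsto t^n$, and I set $W_mM:=\bigoplus_{n\le m}M_n$.

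The heart of the argument, and the main obstacle, is to show $W_\bullet M$ is independent of $s$. Any two Levi splittings are conjugate by some $u\in\U(F)$, so it suffices to prove $u\cdot W_mM=W_mM$. Here the negative-weight hypothesis is essential: writing $\u=\mathrm{Lie}(\U)$, the generators $H_1(\U)\cong\u/[\u,\u]$ carry only negative $\omega$-weights, and since the lower central series quotients of $\u$ are spanned by iterated brackets of generators, whose weights add, \emph{all} weights occurring in $\u$ are strictly negative. For $X\in\u$ of weight $-w$ with $w\ge 1$ and $v\in M_n$ one computes $\tilde\omega(t)\cdot(X\cdot v)=t^{\,n-w}\,(X\cdot v)$, so $\u\cdot M_n\subseteq\bigoplus_{n'<n}M_{n'}$. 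Writing $u=\exp(X)$ then gives $u\cdot v\equiv v\pmod{W_{n-1}M}$ for $v\in M_n$, whence $u$ preserves each $W_mM$ and induces the identity on $\Gr^W_\bullet M$; applying the same to $u^{-1}$ yields $u\cdot W_mM=W_mM$. Thus $W_\bullet M$ is canonical.

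The three assertions then follow quickly. For (i), the Levi factor $s(R)$ commutes with $\tilde\omega(\Gm)$ since $\omega$ is central in $R$, hence preserves every weight space and every $W_mM$; together with the fact that $\U$ preserves $W_\bullet M$ just proved, and $\cG=\U\rtimes s(R)$, this shows $\cG$ preserves $W_\bullet M$. For (ii), the same computation shows $\U$ acts trivially on each $\Gr^W_nM$, so the $\cG$-action there factors through $\cG/\U\cong R$. For (iii), using one Levi splitting to grade both $M$ and $N$, any morphism $\phi\colon M\to N$ of $\cG$-modules is $\tilde\omega(\Gm)$-equivariant, hence $\phi(M_n)\subseteq N_n$; consequently a short exact sequence of $\cG$-modules is exact in each weight by semisimplicity of the $\Gm$-action, and taking $\bigoplus_{n\le m}$, the single graded piece, or $\bigoplus_{n>m}$ preserves this exactness. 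This makes $M\mapsto W_mM$, $M\mapsto\Gr_\bullet M$, and $M\mapsto M/W_mM$ exact, with the negative-weight condition being the only nonformal input anywhere in the proof.
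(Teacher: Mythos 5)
This proposition is imported by the paper from Hain--Matsumoto (\cite[Prop.~3.8, Thms.~3.9 \& 3.12]{wei}) with no proof of its own, so the relevant comparison is with the argument in that reference: your proposal is correct and is essentially that argument. You lift $\omega$ to $\tilde\omega = s\circ\omega$ via a Levi splitting, define $W_mM$ as the sum of the $\tilde\omega(\Gm)$-weight spaces of weight $\le m$, and use the strict negativity of the weights on $\u$ (propagated from $H_1(\U)$ by additivity of weights under brackets) to show that conjugating the splitting by $u\in\U(F)$ does not change the filtration, that $\U$ acts trivially on $\Gr^W_\bullet M$, and that the functors $W_m$, $\Gr^W_\bullet$, and $M/W_m$ are exact --- which is precisely how the cited source establishes (i)--(iii).
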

We immediately see that the results of the proposition extend to direct and inverse limits of finite dimensional $\cG$-modules. 
\subsection{Applications to the universal curves}\label{Applications to universal curves}
  Let $k$ be a finite field of characteristic $p$. Let $g\geq 3$, $n\geq 0$, and $m \geq 1$ be prime to $p$. 
For  a prime $\ell$ distinct from $p$ and \textcolor{black}{$A=\Zl$ or $\Ql$}, set $H_A=H^1_\et(C_\etabar, A(1))$, where $C_\etabar$ is the fiber of the universal curve $\pi: \cC_{g,n/k}[m]\to \M_{g,n/k}[m]$ over $\etabar$. For simplicity, when $A =\Ql$, we denote $H_\Ql$ by $H$. 
Since the $\ell$-adic sheaf $R^1\pi_\ast \Zl(1)$ is smooth over $\M_{g,n/k}[m]$, we obtain a representation $\rho_\etabar: \pi_1(\M_{g,n/k}[m], \etabar)\to \GSp(H_{\Zl})$.  Note that when $k$ is a finite field \textcolor{black}{with $\Char(k)\not =\ell$}, a number field, or \textcolor{black}{a local field $\Q_q$}, the $\ell$-adic cyclotomic character $\chi_\ell:G_k \to \Zl^\times$ has infinite image. 
\begin{proposition}[{\cite[\textcolor{black}{Prop.~8.3}]{wat1}}]\label{monodromy density} 
  Let $k$ be a finite field of characteristic $p$.  If $g\geq 3$, $n\geq 0$,  $\ell$ is a prime number distinct from $p$, and $r\geq 0$,  then the image of the monodromy representation 
 $$\rho_\etabar:\pi_1(\M_{g,n/k}[\ell^r], \etabar)\to \GSp(H)$$ is Zariski-dense in $\GSp(H)$. 

\end{proposition}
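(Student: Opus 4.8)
The plan is to determine the Zariski closure $G := \overline{\im \rho_\etabar}$ inside $\GSp(H)$ and show it is everything. I would work with the similitude exact sequence
\[
1 \to \Sp(H) \to \GSp(H) \xrightarrow{\tau} \Gm \to 1,
\]
where $\tau$ is the multiplier character. Since $\Sp(H) = \ker \tau$ is normal in $\GSp(H)$ with quotient $\Gm$, it suffices to prove two things: (i) $G \supseteq \Sp(H)$, and (ii) $\tau(G) = \Gm$. Granting these, $G/\Sp(H)$ is a closed subgroup of $\Gm$ that equals the image $\tau(G) = \Gm$, and hence $G = \GSp(H)$.

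For the similitude statement (ii), I would invoke the homotopy exact sequence for the structure morphism $\M_{g,n/k}[\ell^r] \to \Spec k$,
\[
1 \to \pi_1(\M_{g,n/\bar k}[\ell^r], \etabar) \to \pi_1(\M_{g,n/k}[\ell^r], \etabar) \to G_k \to 1.
\]
The Tate twist $\Ql(1)$ is pulled back from $\Spec k$, so the geometric fundamental group acts trivially on it and therefore preserves the cup-product pairing $H \times H \to \Ql(1)$ exactly; that is, $\rho_\etabar$ carries $\pi_1(\M_{g,n/\bar k}[\ell^r], \etabar)$ into $\Sp(H)$. Consequently $\tau \circ \rho_\etabar$ factors through $G_k$ and is identified with the $\ell$-adic cyclotomic character $\chi_\ell \colon G_k \to \Zl^\times \subset \Gm(\Ql)$. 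As recalled just before the statement, for $k$ finite with $\Char(k) \neq \ell$ the image of $\chi_\ell$ is infinite; since every proper closed subgroup of $\Gm$ is finite, an infinite subgroup of $\Gm(\Ql)$ is Zariski-dense. This yields $\tau(G) = \Gm$.

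The genuine content is (i): the geometric monodromy image $\Gamma := \rho_\etabar\bigl(\pi_1(\M_{g,n/\bar k}[\ell^r], \etabar)\bigr) \subseteq \Sp(H_{\Zl})$ should be Zariski-dense in $\Sp(H)$. I would first show $\Gamma$ is open in the compact $\ell$-adic group $\Sp(H_{\Zl})$. This reduces to surjectivity of the geometric monodromy onto the finite symplectic quotients $\Sp(H_{\Zl}/\ell^s)$ at each level $s \geq r$ (for the level-$\ell^r$ stack, onto the corresponding principal congruence subgroup), which is the essential geometric input. In characteristic zero this is classical, as the relevant mapping class group surjects onto $\Sp_{2g}(\Z)$ and hence onto every $\Sp_{2g}(\Z/\ell^s)$. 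To transfer it to characteristic $p$, I would use that $\M_{g,n}$ is smooth over $\Spec \Z$ and that the specialization homomorphism on \'etale fundamental groups from a characteristic-zero geometric fibre to the characteristic-$p$ one is surjective; the prime-to-$p$ finite symplectic quotients thereby descend to characteristic $p$. I expect this openness to be the main obstacle, since it is where the actual geometry of $\M_{g,n}$ enters, whereas the rest of the argument is formal.

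Finally, an open subgroup $\Gamma \subseteq \Sp(H_{\Zl})$ is automatically Zariski-dense in $\Sp(H)$: its $\ell$-adic Lie algebra is all of $\mathfrak{sp}(H)$, so its Zariski closure is a closed subgroup of full dimension, and since $\Sp(H)$ is connected this closure must be $\Sp(H)$ itself. This establishes (i), and combined with (ii) it forces $G = \GSp(H)$, as required.
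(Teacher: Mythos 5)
Your overall architecture is the standard one and matches the framework this paper itself sets up (note that the paper gives no proof of this proposition: it is imported wholesale from \cite[Prop.~8.3]{wat1}). Indeed, the diagram in Section 3 relating $\rho_\etabar$, $\rho^\geom_\etabar$, and $\chi_\ell$ shows that $\tau\circ\rho_\etabar$ is $\chi_\ell$ composed with the projection to $G_k$, the paper explicitly recalls that $\chi_\ell$ has infinite image when $k$ is finite with $\Char(k)\neq\ell$, and it recalls Zariski-density of the geometric monodromy in $\Sp(H)$. Your similitude step (ii) and the group-theoretic patching of (i) and (ii) are correct as written.

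The step that would fail as literally stated is the specialization claim inside (i). For the \emph{non-proper} stack $\M_{g,n}$ there is no surjective specialization homomorphism $\pi_1(\M_{g,n/\overline{\Q}_p}[\ell^r])\to\pi_1(\M_{g,n/\overline{\F}_p}[\ell^r])$ on full \'etale fundamental groups: in characteristic $p$ the special fibre has huge wild (Artin--Schreier type) quotients that are not quotients of the characteristic-zero fundamental group (already $\pi_1(\Aff^1_{\overline{\F}_p})\neq 1$ while $\pi_1(\Aff^1_{\overline{\Q}_p})=1$), so the natural specialization map exists only with target the \emph{tame} fundamental group $\pi_1^{\mathrm{t}}$ taken relative to the normal crossings compactification $\Mbar_{g,n}$, which is proper and smooth over $\Z$ (SGA~1, Exp.~XIII). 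Relatedly, ``the prime-to-$p$ finite symplectic quotients thereby descend'' is not the right formulation: $\Sp_{2g}(\Z/\ell^s)$ typically has order divisible by $p$, so a surjection onto it need not factor through the maximal prime-to-$p$ quotient; the relevant condition is tameness of the cover, not prime-to-$p$ order of the group. The standard repair: the level-$\ell^s$ covers $\M_{g,n}[\ell^s]\to\M_{g,n}$ are tame along the boundary because the boundary inertia acts on $H^1(C,\Z/\ell^s)$ through transvections, i.e.\ through $\ell$-groups, so the characteristic-$p$ monodromy representation mod $\ell^s$ factors through $\pi_1^{\mathrm{t}}$; then either surjectivity of the tame specialization map, or merely its existence together with the fact that the representation comes from the lisse sheaf $R^1\pi_\ast(\Z/\ell^s)$ defined over $\Z[1/\ell]$ (so the characteristic-zero monodromy equals the characteristic-$p$ monodromy composed with specialization), shows that the characteristic-$p$ geometric image contains the characteristic-zero image, namely the full level-$\ell^r$ principal congruence subgroup. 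With this correction, the rest of your argument (openness in $\Sp(H_{\Zl})$ implies Zariski density, plus the similitude part) goes through and recovers the cited result.
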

%
%
Define a central cocharacter $\omega:\Gm\to \GSp(H)$ by sending $z$ to $z^{-1}\id$. 
Let $r$ be a nonnegative integer. Denote the weighted completion of $\pi_1(\M_{g,n/k}[\ell^r],\etabar)$ with respect to $\rho_{\etabar}$ and $\omega$ by 
$$(\cG_{g,n}[\ell^r],\,\, \tilde{\rho}_{\etabar}:\pi_1(\M_{g,n/k}[\ell^r],\etabar)\to \cG_{g,n}[\ell^r](\Ql)).$$
The completion $\cG_{g,n}[\ell^r]$ is a negatively weighted extension of $\GSp(H)$ by a prounipotent $\Ql$-group \textcolor{black}{$\U_{g,n}[\ell^r]$. The} Lie algebras of $\cG_{g,n}[\ell^r]$ and $\U_{g,n}[\ell^r]$ are denoted by $\g_{g,n}[\ell^r]$ and $\u_{g,n}[\ell^r]$, respectively. 
\begin{variant} Let $\bar x$ be a geometric point in the fiber $C^o_\etabar$. We also consider $\bar x$ as a geometric point in $\cC_{g,n/k}[\ell^r]$. Let $(\cG_{\cC_{g,n}}[\ell^r], \tilde{\rho}^\cC_\etabar:\pi_1(\cC_{g,n/k}[\ell^r], \bar x) \to \cG_{\cC_{g,n}}[\ell^r](\Ql))$ be the weighted completion of $\pi_1(\cC_{g,n/k}[\ell^r], \bar x)$ with respect to the representation $\rho^\cC_{\etabar}:=\rho_\etabar\circ\pi_\ast :\pi_1(\cC_{g,n/k}[\ell^r], \bar x) \to \pi_1(\M_{g,n/k}[\ell^r], \etabar)\to \GSp(H)$ and $\omega$. Denote the prounipotent radical of $\cG_{\cC_{g,n}}[\ell^r]$ by $\U_{\cC_{g,n}}[\ell^r]$.   The Lie algebras of $\cG_{\cC_{g,n}}[\ell^r]$ and $\U_{\cC_{g,n}}[\ell^r]$ are denoted by $\g_{\cC_{g,n}}[\ell^r]$ and $\u_{\cC_{g,n}}[\ell^r]$, respectively.
\end{variant}
\subsection{Relative completion of $\pi_1(\M_{g,n/\bar k}[\ell^r])$ and the $\cG_{g,n}[\ell^r]$-module $\u^\geom_{g,n}[\ell^r]$} 
Let $\bar k$  be the separable closure of $k$ in $\overline{K}$, where $k$ is as above.   
Another key ingredient in this paper is the Lie algebra of the relative completion of $\pi_1(\M_{g,n/\bar k}[\ell^r], \etabar)$, which admits a $\cG_{g,n}[\ell^r]$-module structure and hence a weight filtration. A detailed review and basic properties of relative completion are given in \cite{hain5}.
Recall that when $\ell$ is different from $\mathrm{char}(k)$, the image of the  monodromy representation $\rho^\geom_{\etabar}: \pi_1(\M_{g,n/\bar k}[\ell^r], \etabar)\to \Sp(H)$ is Zariski-dense in $\Sp(H)$.
The relative completion of $\pi_1(\M_{g,n/\bar k}[\ell^r], \etabar)$ with respect to $\rho^\geom_{\etabar}$ consists of a proalgebraic $\Ql$-group $\cG^{\geom}_{g,n}[\ell^r]$ and a Zariski-dense homomorphism $\tilde{\rho}^\geom_{\etabar}:\pi_1(\M_{g,n/\bar k}[\ell^r], \etabar)\to \cG^{\geom}_{g,n}[\ell^r](\Ql)$. The proalgebraic group $\cG^{\geom}_{g,n}[\ell^r]$ is an extension of $\Sp(H)$ by a prounipotent $\Ql$-group \textcolor{black}{$\U^{\geom}_{g,n}[\ell^r]$, 
and} the homomorphism $\tilde{\rho}^\geom_{\etabar}$ satisfies a universal property:
if $G$ is an extension of $\Sp(H)$ by a prounipotent group $U$ and there is a Zariski-dense homomorphism $\rho^\geom_G: \pi_1(\M_{g,n/\bar k}[\ell^r], \etabar)\to G(\Ql)$ lifting $\rho^\geom_{\etabar}$, then there is a unique morphism $\phi_G:\cG^{\geom}_{g,n}[\ell^r]\to G$ such that 
$\rho^\geom_G=\phi_G\circ \tilde{\rho}^\geom_{\etabar}.$
Denote the Lie algebras of $\cG^{\geom}_{g,n}[\ell^r]$, $\U^{\geom}_{g,n}[\ell^r]$, and $\Sp(H)$ by $\g^{\geom}_{g,n}[\ell^r], \u^{\geom}_{g,n}[\ell^r]$, and $\r$, respectively. \textcolor{black}{For $r =0$, we denote $\cG^\geom_{g,n}[1]$, $\U^\geom_{g,n}[1]$, $\g^\geom_{g,n}[1]$, and $\u^\geom_{g,n}[1]$ by $\cG^\geom_{g,n}$, $\U^\geom_{g,n}$, $\g^\geom_{g,n}$, and $\u^\geom_{g,n}$, respectively. }
We list results needed in this paper:

\begin{proposition}[{ \cite[Thm.~3.9 ]{wei}, \cite[Thm.~5.6, Props.~8.4, ~8.6 \& ~8.7]{wat1}}] \label{comparison}With notation as above, \textcolor{black}{when $g\geq 3$,} we have the following results:
\begin{enumerate}
\item The conjugation action of $\pi_1(\M_{g,n/k}[\ell^r], \etabar)$ on $\pi_1(\M_{g,n/\bar k}[\ell^r], \etabar))$ induces the adjoint action of $\cG_{g,n}[\ell^r]$ on $\g^{\geom}_{g,n}[\ell^r]$, and hence on $\u^{\geom}_{g,n}[\ell^r]$. Therefore, the Lie algebras $\g^{\geom}_{g,n}[\ell^r]$ and $\u^{\geom}_{g,n}[\ell^r]$ admit weight filtrations $W_\bullet \g^{\geom}_{g,n}[\ell^r]$ and $W_\bullet \u^{\geom}_{g,n}[\ell^r] $, respectively.
\item There is an isomorphism $\cG^\geom_{g,n}[\ell^r]\cong \cG^\geom_{g,n}$ and 
there are weight filtration preserving isomorphisms 
$$\g^\geom_{g,n}[\ell^r]\cong \g^\geom_{g,n} \,\,\text{ and }\,\, \u^\geom_{g,n}[\ell^r] \cong \u^\geom_{g,n}$$
that induce $\GSp(H)$-equivariant isomorphisms of graded Lie algebras
$$\Gr^W_\bullet\g^{\geom}_{g,n}[\ell^r]\cong\Gr^W_\bullet\g^{\geom}_{g,n} \,\,\text{ and }\,\, \Gr^W_\bullet\u^\geom_{g,n}[\ell^r] \cong \Gr^W_\bullet\u^\geom_{g,n}.$$
\item There is a weight filtration preserving  isomorphism between $\g^\geom_{g,n}$ and \textcolor{black}{the Lie algebra of }the relative completion of $\pi_1(\M_{g,n/\bar\Q}, \etabar')$ with respect to $\rho^\geom_{\etabar'}:\pi_1(\M_{g,n/\bar\Q}, \etabar')\to \Sp(H^1_\et(C_{\etabar'}, \Ql(1)))$, where $\etabar'$ is a geometric generic point of $\M_{g,n/\bar\Q}$. 
\end{enumerate}
\end{proposition}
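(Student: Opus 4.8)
These three statements are proved in \cite{wei} and \cite{wat1}, and I would organize a proof around three inputs: functoriality of completion, insensitivity of relative completion to the congruence level, and Grothendieck's specialization theorem. For (i), I would start from the homotopy exact sequence
$$1\to \pi_1(\M_{g,n/\bar k}[\ell^r], \etabar)\to \pi_1(\M_{g,n/k}[\ell^r], \etabar)\to G_k\to 1,$$
which exhibits the geometric fundamental group as a normal subgroup on which $\pi_1(\M_{g,n/k}[\ell^r],\etabar)$ acts by conjugation; this action preserves the symplectic monodromy $\rho^\geom_\etabar$ up to the $\GSp(H)$-structure supplied by the cyclotomic character. By the functoriality of relative completion the conjugation action descends to an action of $\pi_1(\M_{g,n/k}[\ell^r],\etabar)$ on $\cG^\geom_{g,n}[\ell^r]$, hence on its Lie algebra. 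I would then factor this action through the weighted completion $\cG_{g,n}[\ell^r]$ using the universal property of the latter, so that $\g^\geom_{g,n}[\ell^r]$ and $\u^\geom_{g,n}[\ell^r]$ acquire $\cG_{g,n}[\ell^r]$-module structures. Proposition~\ref{weight filt} then furnishes the weight filtrations, giving (i).

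For (ii), the essential point is that the relative completion is insensitive to the congruence level $\ell^r$. The deck group $\pi_1(\M_{g,n/\bar k})/\pi_1(\M_{g,n/\bar k}[\ell^r])$ is a finite quotient of $\Sp(H_{\Zl})$, and so is already visible inside the reductive quotient $\Sp(H)$ of the relative completion. I would reduce the claim to the invariance of the continuous cohomology groups $H^\bullet_\cts(\pi_1(\M_{g,n/\bar k}[\ell^r], \etabar), V)$, for $V$ an $\Sp(H)$-representation, which control the prounipotent radical $\U^\geom$ of the relative completion; this invariance is exactly the content needed to identify $\cG^\geom_{g,n}[\ell^r]$ with $\cG^\geom_{g,n}$. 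Since the identification is $\GSp(H)$-equivariant for the module structures produced in (i), it preserves the weight filtration and induces the asserted isomorphisms on the associated graded.

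For (iii), I would exploit that $\M_{g,n}$ is smooth over $\Spec\Z$ and invoke the specialization theorem for prime-to-$p$ fundamental groups (\cite{sga1}): the specialization map identifies the pro-$\ell$ geometric fundamental group of the characteristic-$p$ fibre with that of the characteristic-zero fibre, compatibly with the symplectic monodromy to $\Sp(H)$, since $H$ is the $\ell$-adic cohomology of the universal curve and specializes. As relative completion with respect to an $\ell$-adic representation is determined by this prime-to-$p$ datum together with its $\Sp(H)$-structure, the specialization isomorphism induces the desired isomorphism between $\g^\geom_{g,n}$ and the Lie algebra of the relative completion of $\pi_1(\M_{g,n/\bar\Q},\etabar')$.

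The step I expect to be most delicate is (iii): one must upgrade the bare isomorphism of fundamental groups to an isomorphism of completions that respects the weight filtration, an arithmetic structure attached to the central cocharacter $\omega$ (and, in characteristic $p$, to Frobenius weights) rather than to the abstract group. The way I would resolve this is to note, via Proposition~\ref{weight filt}(ii), that each $\Gr^W_n\g^\geom_{g,n}$ is a sum of $\Sp(H)$-representations determined entirely by the reductive quotient; since specialization is compatible with the $\Sp(H)$-monodromy it matches the graded pieces automatically, and the extension data reconstructing the filtration lives in the prime-to-$p$ fundamental group, which specializes isomorphically. (The cohomology-invariance input behind (ii) is also genuinely nontrivial, but it is the comparison across characteristics in (iii) that I expect to carry the real weight.)
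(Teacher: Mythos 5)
The paper offers no proof of Proposition~\ref{comparison}: it is imported directly from \cite[Thm.~3.9]{wei} and \cite[Thm.~5.6, Props.~8.4, 8.6 \& 8.7]{wat1}, and your three-part reconstruction (conjugation plus the universal property of weighted completion for (i), cohomological control of the prounipotent radical to get level-independence in (ii), specialization of prime-to-$p$ fundamental groups plus purity of $H_1(\u^\geom_{g,n})$ to handle the weight filtration in (iii)) follows the same lines as those references, and as the paper's own use of specialization over $\Z^\ur_p$ in the proof of Proposition~\ref{weight -1 and -2 injection}. One caveat: in (iii), smoothness of $\M_{g,n}$ over $\Spec\Z$ is not by itself enough to invoke SGA~1 specialization, since $\M_{g,n}$ is not proper; the standard fix, implicit in the cited sources, is to pass to the smooth proper compactification $\Mbar_{g,n}$ over $\Z$ with normal crossings boundary, so that the tame (hence prime-to-$p$, hence relative pro-$\ell$) fundamental group specializes isomorphically.
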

\subsubsection{Variant} \textcolor{black}{Denote by $\cG^{\geom}_{\cC_{g,n}}[\ell^r]$ the relative completion  of $\pi_1(\cC_{g,n/\bar k}[\ell^r], \bar x)$ with respect to the composition $\pi_1(\cC_{g,n/\bar k}[\ell^r], \bar x)\to \pi_1(\M_{g,n/\bar k}[\ell^r], \etabar)\to \Sp(H)$. It is an extension of $\Sp(H)$ by a prounipotent $\Ql$-group denoted by $\U^{\geom}_{\cC_{g,n}}[\ell^r]$. Denote the Lie algebras of $\cG^{\geom}_{\cC_{g,n}}[\ell^r]$ and $\U^{\geom}_{\cC_{g,n}}[\ell^r]$ by $\g^{\geom}_{\cC_{g,n}}[\ell^r]$ and $\u^{\geom}_{\cC_{g,n}}[\ell^r]$, respectively.  
As above, for $r =0$, we denote $\cG^\geom_{\cC_{g,n}}[1]$, $\U^\geom_{\cC_{g,n}}[1]$, $\g^\geom_{\cC_{g,n}}[1]$, and $\u^\geom_{\cC_{g,n}}[1]$ by $\cG^\geom_{\cC_{g,n}}$, $\U^\geom_{\cC_{g,n}}$, $\g^\geom_{\cC_{g,n}}$, and $\u^\geom_{\cC_{g,n}}$, respectively.  The analogue of Proposition \ref{comparison} also holds for $\g^{\geom}_{\cC_{g,n}}[\ell^r]$ and $\u^{\geom}_{\cC_{g,n}}[\ell^r]$ (see \cite[\S 8.1]{wat1}). }
\subsection{Exact sequences} \label{exact seqs}The relative and weighted completions associated to the complete universal curve $\pi$ fit in the following commutative diagram.  \textcolor{black}{Let $\cP$ be the continuous unipotent completion of $\pi_1(C_\etabar, \bar x)$ over $\Ql$. It is a prounipotent group over $\Ql$.} Then for $g\geq 2$, by \cite[Prop.~7.6]{wat1} there is a commutative diagram of completions ($a$)\label{key seq}
 $$\xymatrix@C=1pc @R=1pc{
		1\ar[r]& \cP\ar[r]\ar@{=}[d] &\cG^\geom_{\cC_{g,n}}[\ell^r]\ar[r]\ar[d]&\cG^\geom_{g,n}[\ell^r]\ar[r]\ar[d]&1\\
		1\ar[r] &\cP\ar[r] &\cG_{\cC_{g,n}}[\ell^r]\ar[r]&\cG_{g,n}[\ell^r]\ar[r]&1,
	}
	$$
 where the rows are exact and the middle and right-hand vertical maps are induced by the natural map $\M_{g,n/\bar k}[\ell^r]\to \M_{g,n/k}[\ell^r]$ given by base change. \textcolor{black}{Let $\cP^o$ be the continuous unipotent completion of $\pi_1(C^o_\etabar, \bar x)$ over $\Ql$. It is a prounipotent group over $\Ql$.}  Similarly, we have the following result for the  universal punctured curve $\pi^o$.
 
 \begin{proposition}\label{exact seq for punctured universal curve}
 With notation as above, if $g\geq 2$ and $n\geq1$, then there is a commutative diagram of completions associated to $\pi^o:\M_{g,n+1/k}\to \M_{g,n/k}$:
  $$\xymatrix@C=1pc @R=1pc{
		1\ar[r]& \cP^o\ar[r]\ar@{=}[d] &\cG^\geom_{g,n+1}[\ell^r]\ar[r]\ar[d]&\cG^\geom_{g,n}[\ell^r]\ar[r]\ar[d]&1\\
		1\ar[r] &\cP^o\ar[r] &\cG_{g,n+1}[\ell^r]\ar[r]&\cG_{g,n}[\ell^r]\ar[r]&1.
	}
	$$

 \end{proposition}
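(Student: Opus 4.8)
The plan is to reproduce, for the open fibre $C^o_\etabar$, the argument behind diagram ($a$) in the complete case (\cite[Prop.~7.6]{wat1}), keeping track of the two places where non-properness of the fibre enters. Forgetting the last marked point realises $\pi^o:\M_{g,n+1/k}[\ell^r]\to\M_{g,n/k}[\ell^r]$ as the universal $n$-punctured curve, with geometric generic fibre $C^o_\etabar=C_\etabar\setminus\{p_1,\dots,p_n\}$. First I would invoke the \'etale homotopy exact sequence (\ref{homotopy seq for univ punc curve}), obtained from SGA~1, Exp.~XIII, \S4 applied to the smooth, non-proper morphism $\pi^o$ with geometrically connected fibre; here the fibre contributes its pro-$\ell$ completion $\pi_1(C^o_\etabar,\bar x)^{(\ell)}$ and the middle term is a distinguished quotient $\pi_1'$ of $\pi_1(\M_{g,n+1/k}[\ell^r],\bar x)$. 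The same construction over $\bar k$ yields the geometric analogue.

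Next I would pass to completions functorially. The monodromy representations $\rho_\etabar$ and $\rho^\geom_\etabar$ live on the base and pull back along $\pi^o_\ast$; because $C^o_\etabar$ maps to the point $\etabar$, these pullbacks restrict trivially to the fibre group, so $\pi_1(C^o_\etabar,\bar x)^{(\ell)}$ is carried into the prounipotent part. Applying the universal properties of weighted completion (\cite{wei}) and of relative completion (\cite{hain5}) to (\ref{homotopy seq for univ punc curve}) then produces the horizontal maps $\cG_{g,n+1}[\ell^r]\to\cG_{g,n}[\ell^r]$ and $\cG^\geom_{g,n+1}[\ell^r]\to\cG^\geom_{g,n}[\ell^r]$ together with canonical homomorphisms from $\cP^o$ into their kernels; the vertical maps are induced by the base change $\M_{g,n/\bar k}[\ell^r]\to\M_{g,n/k}[\ell^r]$ and the comparison between relative and weighted completion, and commutativity is formal. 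The left vertical map is the identity, since $\cP^o$ depends only on the geometric fibre $C^o_\etabar$, which is unchanged when $k$ is replaced by $\bar k$.

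The heart of the matter is exactness of the two rows. Right exactness is immediate: $\pi_1'\to\pi_1(\M_{g,n/k}[\ell^r],\etabar)$ is surjective and the image of $\rho_\etabar$ is Zariski dense (Proposition \ref{monodromy density}), and completion carries such a surjection to a surjection. For the identification of the kernel with $\cP^o$ and for left exactness (injectivity of $\cP^o$), I would apply the general criterion for exactness of a sequence of completions along a fibre whose first homology is negatively weighted --- the same criterion that underlies diagram ($a$) in the complete case. Its hypothesis holds here: dually to $H_1(\cP^o)=H_1(C^o_\etabar,\Ql)$, the cohomology $H^1_\et(C^o_\etabar,\Ql)$ sits in the Gysin sequence
$$0\to H^1_\et(C_\etabar,\Ql)\to H^1_\et(C^o_\etabar,\Ql)\to \Ql(-1)^{\oplus(n-1)}\to0,$$
whose weight-one part comes from the complete curve and whose weight-two part is spanned by the residue classes of the punctures; hence $H_1(\cP^o)$ carries only the strictly negative weights $-1$ and $-2$.

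The step I expect to be the \emph{main obstacle} is precisely this last one, the left exactness, where the failure of completion to be exact is measured by a cohomological obstruction that must be shown to vanish. Compared with the complete-curve case (Prop.~7.6), the new features are the extra weight-two graded piece contributed by the $n$ punctures and the non-properness of the fibre, which forces the pro-$\ell$ completion and the quotient $\pi_1'$ in (\ref{homotopy seq for univ punc curve}). I would control the obstruction by comparing with the complete curve through the open immersion $C^o_\etabar\hookrightarrow C_\etabar$ and by using the explicit weight filtrations on the geometric prounipotent radicals $\u^\geom_{g,n}[\ell^r]$ and $\u^\geom_{\cC_{g,n}}[\ell^r]$ recorded in Proposition \ref{comparison} and its variant, which make the required vanishing transparent.
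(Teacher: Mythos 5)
Your skeleton (profinite homotopy sequences, functorial passage to completions, right exactness being routine, left exactness being the crux) matches the paper's, but there are genuine gaps at exactly the two points the paper spends its effort on. The main one: you take the left exactness of the sequence (\ref{homotopy seq for univ punc curve}) as given by SGA~1, Exp.~XIII, \S4. It is not. SGA~1 XIII 4.3--4.4 gives exactness only at the middle and right terms --- the paper's diagram $(\ast)$ is deliberately displayed \emph{without} a ``$1$'' on the left --- and the injectivity of $\pi_1(C^o_\etabar,\bar x)^{(\ell)}\to\pi_1'(\M_{g,n+1/k}[\ell^r],\bar x)$ is precisely the hard content of the proposition; indeed the introduction cites this very proposition, alongside SGA~1, as the source of (\ref{homotopy seq for univ punc curve}). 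The paper proves this injectivity by mapping the rows over $\bar k$ and $k$ into the corresponding row over $\Z[1/\ell]$ and comparing with Hain--Matsumoto's Galois category of geometrically relative-$\ell$ coverings (\cite[Prop.~3.1(2)]{relative prol} and the proof of \cite[Thm.~7.6]{relative prol}), whose associated sequence is known to be left exact. Nothing in your proposal substitutes for this arithmetic comparison, and it cannot be waved through: passing to pro-$\ell$ quotients (or to completions) does not in general preserve injectivity, which is exactly why the quotient $\pi_1'$ and this extra argument are needed.

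The second gap is that the ``general criterion'' you invoke for exactness of the completed sequences is misstated. Negativity of the weights on $H_1(\cP^o)$ (your Gysin-sequence computation, which is correct) only ensures that the weighted-completion framework applies; the criterion the paper actually uses, \cite[Prop.~6.11]{hain2}, takes as input an exact --- in particular left-exact --- sequence of profinite groups (whence the dependence on the first point) and requires in addition that $\cP^o$ be center-free, which the paper quotes from \cite{ntu}. Your proposal never verifies center-freeness, and the weight condition alone does not give injectivity of $\cP^o$ into $\cG_{g,n+1}[\ell^r]$. A smaller structural remark: for the geometric row the paper does not re-run any criterion; it gets surjectivity from right exactness of relative completion \cite[Prop.~3.7]{hain5} and then deduces left exactness of the top row from the already-established exactness of the bottom (weighted) row via the commutative diagram, rather than treating both rows symmetrically as you propose.
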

 
 \begin{proof}
 Consider the commutative diagram of fundamental groups
  $$\xymatrix@C=1pc @R=1pc{
		 &\pi_1(C^o_\etabar, \bar x)\ar[r]\ar@{=}[d] &\pi_1(\M_{g,n+1/\bar k}[\ell^r],\bar x )\ar[r]\ar[d]&\pi_1(\M_{g,n/\bar k}[\ell^r], \etabar)\ar[r]\ar[d]&1\\
		&\pi_1(C^o_\etabar, \bar x)\ar[r] \ar@{=}[d] &\pi_1(\M_{g,n+1/ k}[\ell^r], \bar x)\ar[r]\ar[d]&\pi_1(\M_{g,n/k}[\ell^r], \etabar)\ar[r]\ar[d]&1\\
 &\pi_1(C^o_\etabar, \bar x)\ar[r] &\pi_1(\M_{g,n+1/\Z[1/\ell]},\bar x )\ar[r]&\pi_1(\M_{g,n/\Z[1/\ell]}, \etabar)\ar[r]&1.		
		}
	$$
Let $Y$ be the kernel of 	$\pi_1(\M_{g,n+1/\bar k}[\ell^r],\bar x )\to \pi_1(\M_{g,n/\bar k}[\ell^r], \etabar)$, and let $W$ be the kernel of the projection $Y\to Y^{(\ell)}$, where $Y^{(\ell)}$ is the pro-$\ell$ completion of $Y$. Define $\pi_1'(\M_{g,n+1/\bar k}[\ell^r],\bar x )$ as the quotient of $\pi_1(\M_{g,n+1/\bar k}[\ell^r],\bar x )$ by $W$. Define $\pi_1'(\M_{g,n+1/k}[\ell^r],\bar x )$  and $\pi_1'(\M_{g,n+1/\Z[1/\ell]},\bar x )$ in a similar manner.
Then the rows of the commutative diagram ($\ast$)
	 $$\xymatrix@C=1pc @R=1pc{
		&\pi_1(C^o_\etabar, \bar x)^{(\ell)}\ar[r]\ar@{=}[d] &\pi_1'(\M_{g,n+1/\bar k}[\ell^r],\bar x )\ar[r]\ar[d]&\pi_1(\M_{g,n/\bar k}[\ell^r], \etabar)\ar[r]\ar[d]&1\\
		&\pi_1(C^o_\etabar, \bar x)^{(\ell)}\ar[r]\ar@{=}[d] &\pi_1'(\M_{g,n+1/ k}[\ell^r], \bar x)\ar[r]\ar[d]&\pi_1(\M_{g,n/k}[\ell^r], \etabar)\ar[r]\ar[d]&1\\
		 &\pi_1(C^o_\etabar, \bar x)^{(\ell)}\ar[r] &\pi_1'(\M_{g,n+1/\Z[1/\ell]},\bar x )\ar[r]&\pi_1(\M_{g,n/\Z[1/\ell]}, \etabar)\ar[r]&1
	}
	$$
	are exact (see \cite[SGA 1, Expos{\'e} XIII, 4.3 and 4.4]{sga1}). Moreover, there is a commutative diagram ($\ast\ast$) with exact rows
 $$\xymatrix@C=1pc @R=1pc{
	&\pi_1(C^o_\etabar, \bar x)^{(\ell)}\ar[r]\ar@{=}[d] &\pi_1'(\M_{g,n+1/ \Z[1/\ell]}, \bar x)\ar[r]\ar[d]&\pi_1(\M_{g,n/\Z[1/\ell]}, \etabar)\ar[r]\ar[d]&1\\
		1\ar[r] &\pi_1(C^o_\etabar, \bar x)^{(\ell)}\ar[r] &\pi_1(\cC(\M_{g, n+1/\Z[1/\ell]}),F_{\bar x} )\ar[r]&\pi_1(\cC(\M_{g, n/\Z[1/\ell]}), F_\etabar)\ar[r]&1,
	}
	$$	
where $\cC(\M_{g, n/\Z[1/\ell]})$ is the Galois category of geometrically relative-$\ell$ coverings over $\M_{g, n/\Z[1/\ell]}$ defined in \cite[\S 7]{relative prol} and where $F_{\bar x}$ and $F_\etabar$ are fiber functors over $\bar x$ and $\etabar$, respectively. 
	The left exactness of the bottom row of the diagram $(\ast\ast)$ follows from 
	\cite[Prop.~3.1 (2)]{relative prol} and the proof of \cite[Thm.~7.6]{relative prol}. Hence the top row of the diagram ($\ast\ast$) is left exact. Therefore, the first and middle rows of the diagram $(\ast)$ are left exact as well.  By the center freeness of $\cP^o$ (e.g. \cite{ntu}) and the exactness criterion for weighted completions \cite[Prop.~6.11]{hain2}, the bottom row of the diagram
	 $$\xymatrix@C=1pc @R=1pc{
		& \cP^o\ar[r]\ar@{=}[d] &\cG^\geom_{g,n+1}[\ell^r]\ar[r]\ar[d]&\cG^\geom_{g,n}[\ell^r]\ar[r]\ar[d]&1\\
		1\ar[r] &\cP^o\ar[r] &\cG_{g,n+1}[\ell^r]\ar[r]&\cG_{g,n}[\ell^r]\ar[r]&1.
	}
	$$
is exact. The right exactness of relative completions \cite[Prop.~3.7]{hain5} implies that the top row is exact. The exactness of the bottom row implies that the top row is left exact as well. 
 \end{proof}

\subsection{$\Gr^W_\bullet\u_{g,n}[\ell^r]/W_{-3}$} The Lie algebra $\u_{g,n}[\ell^r]$ is a $\cG_{g,n}[\ell^r]$-module via the adjoint action and hence admits a weight filtration $W_\bullet\u_{g,n}[\ell^r]$. We study the relation between $\Gr^W_\bullet\u^\geom_{g,n}[\ell^r]/W_{-3}$ and $\Gr^W_\bullet\u_{g,n}[\ell^r]/W_{-3}$. The case when $\mathrm{char}(k)=0$ is determined in \cite[\S 8]{hain2}. First, for a finite field $k$, we study the weighted completion of $G_k$. Let $\chi_\ell:G_k\to \Z^\times_\ell \subset \Gm(\Ql)$ be the $\ell$-adic cyclotomic character.   Define a central cocharacter $\omega_\ell:\Gm\to \Gm$ by mapping $z\mapsto z^{-2}$. Denote the weighted completion of $G_k$ with respect to $\chi_\ell$ and $\omega_\ell$ by ($\A_k$, $\tilde{\chi}_\ell:G_k\to \A_k(\Ql)$) and denote the unipotent radical of $\A_k$ by $\cN_k$. Denote the Lie algebra of $\cN_k$  by $\n_k$.
Let $\tau:\GSp(H)\to \Gm$ be the similitude character.   There is a commutative diagram 
$$\xymatrix@C=1pc @R=1pc{
		1\ar[r] &\pi_1(\M_{g,n/\bar k}[\ell^r], \etabar)\ar[r]\ar[d]^{\rho^\geom_\etabar} &\pi_1(\M_{g,n/k}[\ell^r], \etabar)\ar[r]\ar[d]^{\rho_\etabar}&G_k\ar[r]\ar[d]^{\chi_\ell}&1\\
		1\ar[r] &\Sp(H)\ar[r] &\GSp(H)\ar[r]^\tau&\Gm\ar[r]&1\\
		& &\Gm\ar[u]_\omega\ar@{=}[r] &\Gm\ar[u]_{\omega_\ell}&
	}
	$$
	whose rows are exact. Note that $\tau\circ \omega =\omega_\ell$.
In the above diagram, applying weighted completion to the middle and right columns and relative completion to the left column produces a commutative diagram 
$$\xymatrix@C=1pc @R=1pc{
		&\cG^\geom_{g,n}[\ell^r]\ar[r]\ar[d]&\cG_{g,n}[\ell^r]\ar[r]\ar[d]&\A_k\ar[r]\ar[d]&1\\
		1\ar[r] &\Sp(H)\ar[r] &\GSp(H)\ar[r]^\tau&\Gm\ar[r]&1.
	}
	$$
The right exactness of relative and weighted completion implies that the top row is exact. In our case, we have the following result.
	\begin{proposition}\label{weighted comp for G_k}
	With the notation as above, we have $\A_k =\Gm$. 
	\end{proposition}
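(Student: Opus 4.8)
The plan is to prove that the prounipotent radical $\cN_k$ of the weighted completion $\A_k$ is trivial; since $\A_k$ is by construction a negatively weighted extension $1 \to \cN_k \to \A_k \to \Gm \to 1$, this immediately gives $\A_k = \Gm$. Because $\cN_k$ is prounipotent it is recovered from its Lie algebra $\n_k$, and it suffices to show that the abelianization $H_1(\n_k) = \n_k/[\n_k,\n_k]$ vanishes: the Lie algebra $\n_k$ is pronilpotent, so every finite-dimensional nilpotent quotient has abelianization a quotient of $H_1(\n_k)$, and a nonzero finite-dimensional nilpotent Lie algebra always has nonzero abelianization. Thus $H_1(\n_k) = 0$ forces every such quotient, and hence $\n_k$ itself, to vanish.

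To access $H_1(\n_k)$ I would invoke the cohomological description of the prounipotent radical of a weighted completion from the Hain--Matsumoto theory \cite{wei}: for each irreducible representation $V$ of the reductive quotient $R = \Gm$ that has negative weight with respect to $\omega_\ell$, the $V$-isotypic part of $H_1(\n_k)$ is governed by the continuous Galois cohomology group $H^1_\cts(G_k, V)$, where $G_k$ acts on $V$ through $\chi_\ell$. Consequently $H_1(\n_k) = 0$ as soon as $H^1_\cts(G_k, V) = 0$ for every negatively weighted irreducible $\Gm$-representation $V$.

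The next step is to identify these representations. The irreducible representations of $\Gm$ are the characters $t \mapsto t^n$, $n \in \Z$, and with respect to $\omega_\ell(z) = z^{-2}$ the character $t \mapsto t^n$ carries weight $-2n$; hence it is negatively weighted precisely when $n \geq 1$. Pulled back along the cyclotomic character $\chi_\ell$, the character $t \mapsto t^n$ is the Galois module $\Ql(n)$. So the only coefficients that need testing are the Tate twists $\Ql(n)$ with $n \geq 1$.

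Finally I would carry out the elementary Galois-cohomology computation. As $k$ is finite, $G_k \cong \widehat{\Z}$ is topologically generated by the Frobenius $\phi$, acting on $\Ql(n)$ by the scalar $q^n$ with $q = \#k$; since $G_k$ has cohomological dimension one, $H^1_\cts(G_k, \Ql(n)) = \operatorname{coker}\bigl(\phi - 1 : \Ql(n) \to \Ql(n)\bigr)$, and $\phi - 1$ is multiplication by $q^n - 1$. As $q \geq 2$, one has $q^n - 1 \neq 0$ for every $n \neq 0$, so in fact $H^1_\cts(G_k, \Ql(n)) = 0$ for all $n \neq 0$, in particular for all $n \geq 1$. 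Therefore $H_1(\n_k) = 0$ and $\A_k = \Gm$. The computation itself is routine; the one point that requires care is the weight bookkeeping --- matching the negatively weighted characters of $\Gm$ with the positive Tate twists $\Ql(n)$ --- together with a correct invocation of the comparison between $H_1$ of the prounipotent radical and continuous cohomology. Once negativity of the weight has been translated into $n \neq 0$, the nonvanishing of $q^n - 1$ finishes the argument, and the fact that \emph{all} nonzero twists (not merely the negatively weighted ones) have vanishing $H^1$ makes the conclusion insensitive to any duality convention appearing in the comparison isomorphism.
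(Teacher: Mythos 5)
Your proposal is correct, and it reaches the vanishing statement by a genuinely different computation than the paper. The two arguments share the same first reduction: like the paper (which cites \cite[Prop.~6.8]{hain2} in the form $\Hom_\Gm(H_1(\n_k), V) \cong H^1(G_k, V)$ for $V$ of negative weight), you use the Hain--Matsumoto description of $H_1(\n_k)$ to reduce everything to the vanishing of $H^1_\cts(G_k, V)$ for negatively weighted $\Gm$-modules $V$, and your weight bookkeeping (the character $t \mapsto t^n$ has weight $-2n$ under $\omega_\ell(z) = z^{-2}$, so the relevant coefficients are exactly the Tate twists $\Ql(n)$, $n \geq 1$) is accurate. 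Where you diverge is in the Galois cohomology itself: the paper runs the inflation--restriction sequence for $1 \to K_\ell \to G_k \xrightarrow{\chi_\ell} I_\ell \to 1$ (kernel and image of the cyclotomic character), quotes \cite[Ex.~4.12]{wei} to kill the $H^1(I_\ell, \cdot)$ and $H^2(I_\ell, \cdot)$ terms, and then shows $\Hom_{I_\ell}(K_\ell, V) = 0$ using that $G_k$ is abelian and $I_\ell$ is infinite; you instead exploit the procyclic structure of $G_k \cong \widehat{\Z}$ directly and compute $H^1_\cts(G_k, \Ql(n)) = \operatorname{coker}\bigl(\phi - 1\bigr) = \operatorname{coker}\bigl(q^n - 1\bigr) = 0$ for $n \neq 0$. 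Your route is more elementary and self-contained (no Hochschild--Serre sequence, no external citation beyond the Hain--Matsumoto comparison), while the paper's argument is the specialization of a template that also handles number fields and local fields, where $G_k$ is far from procyclic; for the proposition at hand, which concerns only finite $k$, your approach suffices and is arguably cleaner. Both proofs establish vanishing for \emph{all} nonzero weights, so both are, as you note, insensitive to duality conventions in the comparison isomorphism. The one point you should make explicit for full rigor is the identification $H^1_\cts(\widehat{\Z}, \Ql(n)) \cong \operatorname{coker}(\phi - 1)$ with $\Ql$-coefficients: the coinvariants formula is a statement about finite (or compact) coefficients, so one should choose a Galois-stable lattice $T \subset \Ql(n)$, use $H^1_\cts(\widehat{\Z}, T) \cong T_\phi$ together with $H^1_\cts(G_k, \Ql(n)) \cong H^1_\cts(G_k, T) \otimes_{\Zl} \Ql$, and then conclude; this is standard and does not affect the result.
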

\begin{proof} By \cite[Prop.~6.8]{hain2}, there is an isomorphism $\Hom_\Gm(H_1(\n_k), V) \cong H^1(G_k, V)$ for each finite dimensional $\Gm$-module $V$ of weight $m < 0$. Thus it suffices to show that $H^1(G_k, V)=0$. Let $I_\ell$ and $K_\ell$ be the image and kernel of $\chi_\ell$, respectively.  Since $k$ is a finite field, the image $I_\ell$ is infinite. Associated to the exact sequence,
$1\to K_\ell\to G_k\overset{\chi_\ell}\to I_\ell\to1,$
there is an exact  sequence
$$0\to H^1(I_\ell, H^0(K_\ell, V))\to H^1(G_k, V) \hspace{2in}$$
$$\hspace{1in}\to H^0(I_\ell, H^1(K_\ell, V)) \to H^2(I_\ell, H^0(K_\ell, V)).$$
For $m\not=0$, the first and fourth terms vanish (see \cite[Ex.~4.12]{wei}). Since $K_\ell$ acts trivially on $V$, there is an isomorphism  $H^0(I_\ell, H^1(K_\ell, V))\cong \Hom_{I_\ell}(K_\ell, V)$. The Galois group $G_k\cong \widehat{\Z}$ is abelian, and so the induced action of $I_\ell$  on $K_\ell$ is trivial. Let $\phi \in \Hom_{I_\ell}(K_\ell, V)$ and $x\in K_\ell$. For $a \in I_\ell$, we have 
$\phi(x) =\phi(ax) =  a^m\phi(x).$
For $m\not= 0$, we can find $a$ such that $1-a^m$ is in $\Q^\times_\ell$ since $I_\ell$ is infinite, so $\phi(x) =0$. Thus it follows that $\Hom_{I_\ell}(K_\ell, V)=0$. Therefore, $H^1(G_k, V)$ vanishes. 
\end{proof}
By Proposition \ref{weighted comp for G_k}, $\cN_k$ is trivial and so the natural map $\cG^\geom_{g,n}[\ell^r]\to\cG_{g,n}[\ell^r]$ induces   a surjection $\beta: \U^\geom_{g,n}[\ell^r]\to \U_{g,n}[\ell^r]$ and hence a surjection of pronilpotent Lie algebras $d\beta: \u^\geom_{g,n}[\ell^r]\to\u_{g,n}[\ell^r]$. Applying the functor $\Gr^W_\bullet$ gives a surjection of graded Lie algebras $\Gr(d\beta): \Gr^W_\bullet\u^\geom_{g,n}[\ell^r]\to \Gr^W_\bullet\u_{g,n}[\ell^r]$. 
\begin{proposition}\label{weight -1 and -2 injection}
For $g \geq 3$ and $n\geq 0$, the graded Lie algebra map $\Gr(d\beta)$ induces $\GSp(H)$-equivariant isomorphisms
$$\Gr^W_m\u^\geom_{g,n}[\ell^r]\cong \Gr^W_m\u_{g,n}[\ell^r]\,\,\text{ for } m=-1,-2.$$ 
\end{proposition}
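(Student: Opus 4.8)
The plan is to leverage the triviality of $\cN_k$ established in Proposition~\ref{weighted comp for G_k}. Since $\A_k=\Gm$, the map $\cG^\geom_{g,n}[\ell^r]\to\cG_{g,n}[\ell^r]$ restricts to the surjection $\beta\colon\U^\geom_{g,n}[\ell^r]\to\U_{g,n}[\ell^r]$, so $\Gr(d\beta)$ is surjective and it suffices to prove injectivity in weights $-1$ and $-2$. Writing $\k$ for the kernel of $d\beta$, the exactness of $\Gr^W_\bullet$ (Proposition~\ref{weight filt}(iii)) gives short exact sequences $0\to\Gr^W_m\k\to\Gr^W_m\u^\geom_{g,n}[\ell^r]\to\Gr^W_m\u_{g,n}[\ell^r]\to0$, and the claim becomes $\Gr^W_{-1}\k=\Gr^W_{-2}\k=0$. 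Because the weight grading is negative and the bracket is additive in weights, $\Gr^W_{-1}$ of each Lie algebra is its space of weight $-1$ generators; thus the weight $-1$ statement is entirely about $H_1$, while the weight $-2$ statement will also involve the quadratic relations.

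The main tool is the cohomological presentation of weighted and relative completions from \cite{wei}: for an irreducible negative-weight representation $V$ of the reductive quotient, the multiplicity of $V$ in $H_1(\u)$ is $\dim H^1_\cts(\G,V)$ and the corresponding multiplicity in the minimal relations is governed by $H^2_\cts(\G,V)$. I will compare the arithmetic groups $H^i_\cts(\pi_1(\M_{g,n/k}[\ell^r],\etabar),V)$ with the geometric groups $H^i(\pi_1(\M_{g,n/\bar k}[\ell^r],\etabar),V)$ through the Hochschild--Serre spectral sequence of $1\to\pi_1(\M_{g,n/\bar k}[\ell^r],\etabar)\to\pi_1(\M_{g,n/k}[\ell^r],\etabar)\to G_k\to1$. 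As $G_k\cong\widehat{\Z}$ has $\operatorname{cd}(G_k)=1$ and, exactly as in the proof of Proposition~\ref{weighted comp for G_k}, the groups $H^\ast(G_k,-)$ vanish on nonzero-weight Tate modules, the spectral sequence degenerates to short exact sequences $0\to H^1(G_k,H^{i-1})\to H^i_\cts(\pi_1(\M_{g,n/k}[\ell^r],\etabar),V)\to (H^i)^{G_k}\to0$, where $H^i=H^i(\pi_1(\M_{g,n/\bar k}[\ell^r],\etabar),V)$. So the arithmetic and geometric theories differ only through the $G_k$-action on the low-weight geometric cohomology.

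The input that controls this $G_k$-action is twofold. First, by Proposition~\ref{comparison}(iii) the weights $-1$ and $-2$ parts of the geometric picture coincide with Hain's characteristic-zero computation in \cite[\S8]{hain2}; in particular $H^1(\pi_1(\M_{g,n/\bar k}[\ell^r],\etabar),H)$ is spanned by the classes of the tautological sections, while $H^1(\pi_1(\M_{g,n/\bar k}[\ell^r],\etabar),\Ql(1))=0$ (Harer vanishing $H^1(\M_{g,n/\bar k})=0$ for $g\geq3$) and there are no new generators in weight $-2$. Second, Deligne's purity shows that these low-weight cohomology groups are of Tate type concentrated in weight $0$, so Frobenius, and hence $G_k$, acts trivially on them; concretely the section classes and the weight $-2$ relations are defined already over the prime field. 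Feeding this into the short exact sequences above yields $H^1_\cts(\pi_1(\M_{g,n/k}[\ell^r],\etabar),V)\cong H^1(\pi_1(\M_{g,n/\bar k}[\ell^r],\etabar),V)$ for $V$ of weight $-1$, whence $\Gr^W_{-1}\k=0$, and it pins down the weight $-2$ generators and relations on the arithmetic side.

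Finally I pass from $H_1$ to the full Lie algebra in weight $-2$. Applying the five-term exact sequence in Lie algebra homology to $0\to\k\to\u^\geom_{g,n}[\ell^r]\to\u_{g,n}[\ell^r]\to0$, taking $\Gr^W_{-2}$, and using $\Gr^W_{-1}\k=0$ together with the vanishing of weight $-2$ generators, identifies $\Gr^W_{-2}\k$ with the cokernel of $H_2(\u^\geom_{g,n}[\ell^r])_{-2}\to H_2(\u_{g,n}[\ell^r])_{-2}$. I expect the weight $-2$ bookkeeping to be the main obstacle: one must check that passing from the geometric to the arithmetic theory introduces no spurious quadratic relations, equivalently that $H^1(G_k,H^1(\pi_1(\M_{g,n/\bar k}[\ell^r],\etabar),V))$ contributes nothing (which holds since there are no weight $-2$ generators, so this geometric $H^1$ vanishes) and that $H^2(\pi_1(\M_{g,n/\bar k}[\ell^r],\etabar),V)$ is already $G_k$-invariant for $V$ of weight $-2$. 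Both reduce to the Tate-type, weight-$0$ statement of the previous paragraph via $\operatorname{cd}(G_k)=1$ and the vanishing of $H^\ast(G_k,-)$ on nonzero-weight modules, so that the relation spaces match and $\Gr^W_{-2}\k=0$, completing the proof.
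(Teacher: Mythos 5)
Your proposal takes a genuinely different route from the paper's. The paper proves injectivity of $\Gr(d\beta)$ in weights $-1,-2$ by composing with the adjoint action on $\p$: for $n\geq 1$ each tautological section gives a map $\mathrm{adj}_j:\Gr^W_\bullet\u^\geom_{g,n}[\ell^r]\to\Gr^W_\bullet\Der\p$ that factors through $\Gr^W_\bullet\u_{g,n}[\ell^r]$, and the injectivity of $\prod_j\mathrm{adj}_j$ in weights $-1,-2$ is imported from characteristic zero (\cite[Props.~8.6 \& 8.8]{hain2}) via a specialization isomorphism over $\Z^\ur_p$; the case $n=0$ is then handled separately through $\Out\Der\p$. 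You instead count multiplicities: Hain--Matsumoto's presentation theory converts generators and relations of $\u$ into $H^1_\cts$ and (a subspace of) $H^2_\cts$ of the profinite group, and Hochschild--Serre over $G_k\cong\widehat{\Z}$ compares the arithmetic and geometric groups. This is a viable alternative, treats $n=0$ and $n\geq 1$ uniformly, and is closer to how one argues over number fields. Your weight $-2$ step is essentially sound: the five-term sequence does identify $\Gr^W_{-2}\k$ with $\mathrm{coker}\bigl(H_2(\u^\geom_{g,n}[\ell^r])_{-2}\to H_2(\u_{g,n}[\ell^r])_{-2}\bigr)$, and its vanishing needs only the injectivity of restriction on $H^2$, i.e.\ $H^1(G_k,H^1(\pi_1^\geom,V))=0$; your additional requirement that $H^2(\pi_1^\geom,V)$ be $G_k$-invariant is superfluous. (Note that "no weight $-2$ generators, so this geometric $H^1$ vanishes" implicitly uses the parity remark that a weight $-2$ $\GSp(H)$-irreducible is never $\Sp(H)$-isomorphic to a weight $-1$ one, since Tate twists shift weights by even integers; that is easily supplied.)

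There is, however, a genuine gap at the crux of the weight $-1$ step: the claim that $G_k$ acts trivially on $H^1(\pi_1(\M_{g,n/\bar k}[\ell^r],\etabar),V)$ for weight $-1$ irreducibles $V$. First, Deligne's purity does not give "Tate type concentrated in weight $0$": for $H^1$ of the open stack $\M_{g,n/\bar k}[\ell^r]$ with coefficients in a lisse sheaf pure of weight $-1$, Weil II only confines the weights to $\{0,1\}$, and even weight $0$ on a line does not force the Frobenius action to be trivial. Second, your concrete argument — the section classes are defined over the prime field — covers only the $\bigoplus_{j=1}^n H_j$-isotypic part of $H_1(\u^\geom_{g,n}[\ell^r])\cong\Lambda^3_0H\oplus\bigoplus_{j=1}^nH_j$; it says nothing about the one-dimensional space $H^1(\pi_1^\geom,\Lambda^3_0H)$ attached to the Johnson--Torelli component, which is exactly where an a priori nontrivial character of $G_k$ would destroy the multiplicity count (and for $n=0$ it is the only component). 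The repair uses the paper's own tools: by Proposition \ref{comparison}(i) the conjugation action of $\pi_1(\M_{g,n/k}[\ell^r],\etabar)$ on $\u^\geom_{g,n}[\ell^r]$ is the restriction of the $\cG_{g,n}[\ell^r]$-action, which on $\Gr^W_{-1}$ factors through $\GSp(H)$ by Proposition \ref{weight filt}; since, as a $\GSp(H)$-module with its Tate twists, $H_1(\u^\geom_{g,n}[\ell^r])$ is $\Lambda^3_0H\oplus\bigoplus_j H_j$ (Proposition \ref{comparison}(ii),(iii) together with \cite[Thm.~9.11]{hain2}), the induced $G_k$-action on $\Hom_{\Sp(H)}(H_1(\u^\geom_{g,n}[\ell^r]),V)$ factors through $\Gm=\GSp(H)/\Sp(H)$ acting on Hom-spaces of central weight zero, hence is trivial. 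With this substituted for the purity appeal, your argument goes through; as written, purity cannot carry the weight placed on it.
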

\begin{proof} It suffices to show that $\Gr(d\beta)$ is injective for weight $m=-1,-2$. 
 The Lie algebra of $\cP$ denoted by $\p$ admits a weight filtration via the adjoint action of $\cG_{g,n}[\ell^r]$ on $\p$ and so does the derivation Lie algebra of $\p$ denoted by $\Der \p$.  Assume $n\geq 1$. In the diagram $(a)$ in \S\ref{key seq}, each tautological section $s_j$ of $\cG_{\cC_{g,n}}[\ell^r]\to\cG_{g,n}[\ell^r]$ gives  the weight filtration preserving adjoint action that produces the commutative diagram of graded Lie algebras  
 $$\xymatrix@C=1pc @R=1pc{
	\mathrm{adj}_j:\Gr^W_\bullet\u^\geom_{g,n}[\ell^r]\ar[r]\ar[d]^{\Gr(d\beta)}& \Gr^W_\bullet\Der \p \ar@{=}[d] \\
	 \mathrm{adj}_j:\Gr^W_\bullet\u_{g,n}[\ell^r]\ar[r]&\Gr^W_\bullet\Der\p.
		}
	$$
Recall that $k$ is a finite field of characteristic $p$. Let $\etabar'$ be a geometric generic point of $\M_{g,n/\bar\Q_p}[\ell^r]$ and $C_{\etabar'}$ the fiber of the universal curve $\cC_{g,n/\bar\Q_p}[\ell^r]\to\M_{g,n/\bar\Q_p}[\ell^r]$ over $\etabar'$. Denote the Lie algebra of the $\ell$-adic unipotent completion of $\pi_1(C_{\etabar'}, \bar x')$ by $\p'$. Denote the relative completion of $\pi_1(\M_{g,n/\bar \Q_p}[\ell^r], \etabar')$ with respect to the monodromy representation $\rho^{\bar\Q_p}_{\etabar'}: \pi_1(\M_{g,n/\bar \Q_p}[\ell^r], \etabar') \to \Sp(H^1_\et(C_{\etabar'}, \Ql(1)))$ by $\cG^{\bar\Q_p}_{g,n}[\ell^r]$. Denote the prounipotent radical of $\cG^{\bar\Q_p}_{g,n}[\ell^r]$ by $\U^{\bar\Q_p}_{g,n}[\ell^r]$ and its Lie algebra by $\u^{\bar\Q_p}_{g,n}[\ell^r]$. Fix an isomorphism $\gamma: \pi_1(\M_{g,n/\Z^\ur_p}[\ell^r], \etabar)\cong \pi_1(\M_{g,n/\Z^\ur_p}[\ell^r], \etabar')$, where $\Z^\un_p$ is the maximal unramified extension of $\Z_p$. The isomorphism $\gamma$ induces weight filtration preserving isomorphisms $\u^\geom_{g,n}[\ell^r]\cong \u^{\bar\Q_p}_{g,n}[\ell^r]$, $\p\cong \p'$, and $\Der\p\cong \Der\p'$ that make the diagram
$$\xymatrix@C=1pc @R=1pc{
		\prod_{j=1}^n\Gr^W_\bullet\mathrm{adj}_j:&\Gr^W_\bullet\u^\geom_{g,n}[\ell^r]\ar[r]\ar[d]& \bigoplus_{j=1}^n\Gr^W_\bullet\Der \p \ar[d] \\
		\prod_{j=1}^n\Gr^W_\bullet\mathrm{adj}_j:&\Gr^W_\bullet\u^{\bar\Q_p}_{g,n}[\ell^r]\ar[r]&\bigoplus_{j=1}^n\Gr^W_\bullet\Der\p',
		}
	$$
commute, where the vertical maps are isomorphisms of graded Lie algebras induced $\gamma$. For example, see \cite[\S8.1]{wat1}. From \cite[Prop.~8.6 \& 8.8]{hain2}, it follows that the bottom adjoint map is injective for weight $m=-1, -2$ and so is the adjoint map on the top row.  Therefore, for $n\geq 1$, the graded Lie algebra map $\Gr(d\beta)_m :\Gr^W_m\u^\geom_{g,n}[\ell^r]\to \Gr^W_m\u_{g,n}[\ell^r]$ is injective for $m=-1,-2$. For $n=0$, consider the commutative diagram
$$\xymatrix@C=1pc @R=1pc{
		0\ar[r]&\Gr^W_\bullet\p\ar[r]\ar@{=}[d]&\Gr^W_\bullet\u^\geom_{g,1}[\ell^r]\ar[r]\ar[d]&\Gr^W_\bullet\u^\geom_{g}[\ell^r]\ar[r]\ar[d]&0\\
		0\ar[r]&\Gr^W_\bullet\p\ar[r]&\Gr^W_\bullet\Der\p\ar[r]&\Gr^W_\bullet\Out\Der\p\ar[r]&0,
}
	$$
	where the injection $\Gr^W_\bullet\p \to \Gr^W_\bullet\Der\p$ is the inner adjoint. It follows that the right-hand vertical map is injective for weight $m=-1,-2$. Since the outer action $\Gr^W_\bullet\u^\geom_g[\ell^r]\to \Gr^W_\bullet \Out\Der\p$ factors through $\Gr^W_\bullet\u_g[\ell^r]$, it follows that  the map $\Gr^W_m\u^\geom_g[\ell^r]\to \Gr^W_m\u_g[\ell^r]$ is injective for $m=-1,-2$. 
\end{proof}
Together with the diagram $(a)$ in \S\ref{key seq}, Propositions \ref{comparison} and \ref{weight -1 and -2 injection}  give the following result.
\begin{proposition}\label{iso in weight -1 and -2}For $g\geq 3$ and $\u\in \{\u_{g,n}, \u_{\cC_{g,n}}\} $ there is a $\GSp(H)$-equivariant graded Lie algebra isomorphism
$$\Gr^W_{\bullet}\u/W_{-3}\cong \Gr^W_{\bullet}\u^\geom/W_{-3}.$$
\end{proposition}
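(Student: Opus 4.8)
The plan is to reduce the statement to the weight $-1,-2$ comparison already established in Proposition~\ref{weight -1 and -2 injection}, and then to transport it from the base $\M_{g,n}$ to the universal curve $\cC_{g,n}$ by a five-lemma argument applied to the prounipotent radicals in the diagram $(a)$ of \S\ref{key seq}. The key preliminary observation is that $\Gr^W_\bullet\u/W_{-3}$ is concentrated in weights $-1$ and $-2$: since $\cG_{g,n}[\ell^r]$ is a \emph{negatively} weighted extension of $\GSp(H)$, the abelianization $H_1(\u_{g,n}[\ell^r])$ carries only weights $\le -1$, and as a pronilpotent Lie algebra is topologically generated by its indecomposables, all of $\Gr^W_\bullet\u_{g,n}[\ell^r]$ sits in weights $\le -1$; the same holds for $\u^\geom_{g,n}[\ell^r]$ by the weight theory of relative completion (cf. Proposition~\ref{comparison}(i)). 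Hence $\Gr^W_\bullet\u/W_{-3}=\Gr^W_{-1}\u\oplus\Gr^W_{-2}\u$ in both the weighted and geometric cases. For $\u=\u_{g,n}$ the proposition is then immediate: Proposition~\ref{weight -1 and -2 injection} supplies $\GSp(H)$-equivariant isomorphisms $\Gr^W_m\u^\geom_{g,n}[\ell^r]\cong\Gr^W_m\u_{g,n}[\ell^r]$ for $m=-1,-2$, and Proposition~\ref{comparison}(ii) identifies $\Gr^W_\bullet\u^\geom_{g,n}[\ell^r]$ with $\Gr^W_\bullet\u^\geom_{g,n}$; assembling the two weights yields the isomorphism.

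For $\u=\u_{\cC_{g,n}}$ I would pass to prounipotent radicals in diagram $(a)$. Since $\cP$ is prounipotent and both $\cG_{\cC_{g,n}}[\ell^r]$ and $\cG_{g,n}[\ell^r]$ have reductive quotient $\GSp(H)$, the kernel $\cP$ lands in each prounipotent radical and the radical of the quotient is the quotient of the radicals; thus the two rows of $(a)$ produce exact sequences of pronilpotent Lie algebras $0\to\p\to\u^\geom_{\cC_{g,n}}[\ell^r]\to\u^\geom_{g,n}[\ell^r]\to0$ and $0\to\p\to\u_{\cC_{g,n}}[\ell^r]\to\u_{g,n}[\ell^r]\to0$, compatible through the vertical maps of $(a)$, with the left-hand arrow the identity on $\p$. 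Applying the functors $\Gr^W_\bullet$ and then $-/W_{-3}$, both exact by Proposition~\ref{weight filt}(iii) (extended to pro-objects), I obtain a commutative diagram of short exact sequences of graded Lie algebras
$$\xymatrix@C=1pc @R=1.3pc{
0\ar[r]&\Gr^W_\bullet\p/W_{-3}\ar[r]\ar@{=}[d]&\Gr^W_\bullet\u^\geom_{\cC_{g,n}}/W_{-3}\ar[r]\ar[d]&\Gr^W_\bullet\u^\geom_{g,n}/W_{-3}\ar[r]\ar[d]&0\\
0\ar[r]&\Gr^W_\bullet\p/W_{-3}\ar[r]&\Gr^W_\bullet\u_{\cC_{g,n}}/W_{-3}\ar[r]&\Gr^W_\bullet\u_{g,n}/W_{-3}\ar[r]&0
}$$
in which the left vertical map is the identity and the right vertical map is the isomorphism from the previous paragraph. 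The five lemma then forces the middle map to be an isomorphism, which is exactly the assertion for $\u_{\cC_{g,n}}$.

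The genuine content lives entirely in Proposition~\ref{weight -1 and -2 injection}; granted that input, the present statement is a formal consequence and I expect no serious obstacle beyond bookkeeping. The one point that must be handled with care is the exactness needed to run the five lemma — namely that passage to $\Gr^W_\bullet$ and then to $\u/W_{-3}$ preserves the short exact sequences of radicals — which is supplied by Proposition~\ref{weight filt}(iii) together with the remark that it extends to inverse limits of finite-dimensional modules. The other thing to verify is that the two radical sequences genuinely share the same kernel $\p$ under the identity identification, but this is built into the equality on the left edge of diagram $(a)$.
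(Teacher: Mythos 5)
Your proof is correct and follows essentially the same route as the paper, whose own justification is precisely the one-line assertion that the result follows from diagram $(a)$, Proposition~\ref{comparison}, and Proposition~\ref{weight -1 and -2 injection}; your write-up simply makes explicit the two steps the paper leaves implicit (concentration of $\Gr^W_\bullet\u/W_{-3}$ in weights $-1,-2$, and the five-lemma transfer from $\u_{g,n}$ to $\u_{\cC_{g,n}}$ via the radical sequences with common kernel $\p$). The only cosmetic point is that the concentration in weights $\le -1$ on the geometric side is not literally contained in Proposition~\ref{comparison}(i); it comes from purity of $H_1(\u^\geom_{g,n})$ in weight $-1$ (Hain's characteristic-zero computation transported via Proposition~\ref{comparison}(ii)), exactly as invoked in the proof of Proposition~\ref{condition for existence}.
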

\subsection{$\d_{g,n}$} \label{two-step} Here we review the two-step graded nilpotent Lie algebra $\d_{g,n}$ associated to the universal curve $\pi:\cC_{g,n/k}[\ell^r]\to \M_{g,n/k}[\ell^r]$. The Lie algebra $\d_{g,n}$ introduced  in \cite[\S10]{hain2} plays a key role in \cite{hain2} and our main results.   Let  $\theta$ be the cup product pairing $\Lambda^2H_\Zl\to \Zl(1)$.   The dual $\thetadual$ of $\theta$ can be viewed as an element of $\Lambda^2H_\Zl(-1)$.
Denote the representation $(\Lambda^3H)(-1)/\thetadual\wedge H$ by $\Lambda^3_0H$ and  denote the kernel of $\theta:\Lambda^2H\to \Ql(1)$ by $\Lambda^2_0H$. The representations $\Lambda^3_0H$ and $\Lambda^2_0H$ are irreducible as $\GSp(H)$-modules. 
Define 
$$\d_{g,n}:=\Gr^W_\bullet\u_{g,n}[\ell^r]/(W_{-3} + (\Lambda^2_0H)^\perp) \,\,\text{ and } \,\,\d_{\cC_{g,n}}: = \Gr^W_\bullet\u_{\cC_{g,n}}[\ell^r]/(W_{-3} + (\Lambda^2_0H)^\perp),$$
where $(\Lambda^2_0H)^\perp$ denotes the $\GSp(H)$-complement of the isotypical $\Lambda^2_0H$-component in weight $-2$. 
The fact that $H_1(\p)$ is pure of weight $-1$ implies that by strictness the weight filtration $W_\bullet \p$ agrees with the lower central series of $\p$. Hence, recall the following well known descriptions of $\Gr^W_m\p$ for $m=-1, -2$.
\begin{proposition}\label{low degree p rep}For $g \geq 2$, there are $\GSp(H)$-module isomorphisms
$$\Gr^W_{-1}\p \cong H\,\, \text{ and }\,\, \Gr^W_{-2}\p \cong \Lambda^2_0H.$$
\end{proposition}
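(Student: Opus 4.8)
The plan is to exploit the coincidence of the weight filtration on $\p$ with its lower central series, recorded just above, which reduces the statement to identifying the first two graded quotients of the lower central series of the $\Ql$-Malcev Lie algebra of $\pi_1(C_\etabar,\bar x)$, together with the $\GSp(H)$-action induced by monodromy. Writing $\Gr_\bullet\p$ for this graded Lie algebra, I must show $\Gr_1\p\cong H$ and $\Gr_2\p\cong\Lambda^2_0 H$.

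For the weight $-1$ piece, note that $\Gr^W_{-1}\p=\Gr_1\p$ is the abelianization $H_1(\p)$, which coincides with $H_1$ of the group with $\Ql$-coefficients, i.e.\ with $H^\et_1(C_\etabar,\Ql)$. Poincar\'e duality on the curve identifies $H^\et_1(C_\etabar,\Ql)=H^1_\et(C_\etabar,\Ql)^\vee$ with $H^1_\et(C_\etabar,\Ql(1))=H$, an identification that is $\GSp(H)$-equivariant and carries the correct Tate twist; this gives the first isomorphism.

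For the weight $-2$ piece the essential input is positive-characteristic in nature: since $\ell\neq p$, the geometric pro-$\ell$ fundamental group of $C_\etabar$ is the pro-$\ell$ completion of the topological genus-$g$ surface group (Grothendieck's specialization for the prime-to-$p$ part), so the $\Ql$-unipotent completion $\cP$, and hence $\Gr_\bullet\p$, agrees with that of a surface group. I would then invoke the classical computation of the Malcev Lie algebra of a surface group: it is the one-relator quadratic Lie algebra $\L(H)/(\thetadual)$, in which the single defining relation $\prod_i[a_i,b_i]$ contributes, inside $\Gr_2$ of the free Lie algebra $\L(H)$---canonically $\Lambda^2 H$---exactly the line $\langle\thetadual\rangle$ spanned by the symplectic bivector. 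Hence $\Gr^W_{-2}\p\cong\Lambda^2 H/\langle\thetadual\rangle$ as $\GSp(H)$-modules, the equivariance being automatic since $\thetadual$ spans the canonical line dual to the cup product $\theta$.

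It remains to identify this quotient. Because $\theta(\thetadual)\neq 0$ for $g\geq 1$, the line $\langle\thetadual\rangle$ is a $\GSp(H)$-stable complement to $\Lambda^2_0 H=\ker\theta$ inside $\Lambda^2 H$, so the projection restricts to an isomorphism $\Lambda^2_0 H\cong\Lambda^2 H/\langle\thetadual\rangle$, giving $\Gr^W_{-2}\p\cong\Lambda^2_0 H$. The main obstacle is not any single computation but the clean transport of the characteristic-zero (topological) Malcev structure to the $\ell$-adic setting in characteristic $p$; once the prime-to-$p$ comparison of fundamental groups is in hand---if desired, via the specialization isomorphism underlying Proposition \ref{comparison}---the remaining ingredients are the standard one-relator Lie algebra computation and the $\GSp(H)$-decomposition $\Lambda^2 H=\Lambda^2_0 H\oplus\langle\thetadual\rangle$.
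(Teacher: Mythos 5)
Your proof is correct and takes essentially the same route as the paper: the paper records this proposition as a ``well known'' fact, immediately after observing (as you do) that purity of $H_1(\p)$ in weight $-1$ forces the weight filtration to coincide with the lower central series, so that the statement reduces to the classical computation of the graded Lie algebra of a surface group --- $\Gr_1\cong H$ and $\Gr_2\cong \Lambda^2H/\langle\thetadual\rangle\cong\Lambda^2_0H$ --- transported to characteristic $p$ via the prime-to-$p$ specialization isomorphism. Your write-up simply makes explicit the standard inputs (Labute's one-relator computation and the $\GSp(H)$-stable splitting $\Lambda^2H=\Lambda^2_0H\oplus\langle\thetadual\rangle$) that the paper leaves implicit.
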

The universal curve $\pi:\cC_{g,n/k}[\ell^r]\to \M_{g,n/k}[\ell^r]$ induces a $\GSp(H)$-equivariant graded Lie algebra surjection $\d(\pi):\d_{\cC_{g,n}}\to \d_{g,n}$ whose kernel is given by $\Gr^W_\bullet \p/W_{-3}$: there is the exact sequence of graded Lie algebras
$$0\to \Gr^W_{\bullet}\p/W_{-3} \to \d_{\cC_{g,n}}\overset{\d(\pi)}\to \d_{g,n}\to 0.$$
Hence, Proposition  \ref{comparison} (iii), Proposition \ref{iso in weight -1 and -2}, Proposition \ref{low degree p rep}, and \cite[Thm.~9.11]{hain2} imply that  there are $\GSp(H)$-module isomorphisms  in weight $-1$ and $-2$:
$$\Gr^W_{-1}\d_{g,n} \cong \Lambda^3_0H \oplus \bigoplus_{j=1}^nH_j,\,\, \Gr^W_{-2}\d_{g,n} \cong \bigoplus_{j=1}^n\Lambda^2_0H_j,\,\,\Gr^W_m\d_{g,n} =0 \text{ for } m\leq -3$$
and 
$$\Gr^W_{-1}\d_{\cC_{g,n}} \cong \Lambda^3_0H \oplus \bigoplus_{j=0}^nH_j,\,\, \Gr^W_{-2}\d_{\cC_{g,n}} \cong \bigoplus_{j=0}^n\Lambda^2_0H_j,\,\,\Gr^W_m\d_{\cC_{g,n}} =0 \text{ for } m\leq -3,$$
\textcolor{black}{where $H_j$ is a copy of $H$.}
Furthermore, by \cite[Prop.~10.2]{hain2}, the open immersion $\M_{g,n+1/k}[\ell^r]\hookrightarrow \cC_{g,n/k}[\ell^r]$ induces a $\GSp(H)$-equivariant graded Lie algebra isomorphism $\d_{g,n+1}\overset{\sim}\to \d_{\cC_{g,n}}$ that makes the  diagram 
$$\xymatrix@C=1pc @R=1pc{
		\d_{g, n+1}\ar[r]^{\sim}\ar[dr]&\d_{\cC_{g,n}}\ar[d]^{\d(\pi)}\\
		&\d_{g,n}
}
	$$
	commute, where the surjection $\d_{g, n+1}\to \d_{g,n}$ is induced by the projection $\M_{g,n+1}\to \M_{g,n}$ mapping $[C, x_0, x_1, \ldots, x_n]\mapsto [C, x_1,\ldots,x_n]$. 
	 Each tautological section $s_j$ of $\pi:\cC_{g,n/k}[\ell^r]\to \M_{g,n/k}[\ell^r]$ induces a $\GSp(H)$-equivariant graded Lie algebra section $\d(s_j)$ of $\d(\pi)$ (see \cite[Cor.~10.3]{hain2}). 
	We have the following key result.
	\begin{proposition} [{ \cite[Prop.~10.4 \& 10.8]{hain2}, \cite[Prop.~11.3]{wat1} }] \label{sections of dpi}
	For $g\geq 4$ and $n\geq 1$, the only $\GSp(H)$-equivariant graded Lie algebra sections of $\d(\pi)$ are the sections $\d(s_1),\ldots, \d(s_n)$.	\end{proposition}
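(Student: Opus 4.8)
The plan is to classify the sections through their effect on the associated graded, exploiting that $\d_{g,n}$ and $\d_{\cC_{g,n}}$ are concentrated in weights $-1$ and $-2$, and then to impose that a section respect the Lie bracket. First I would observe that any $\GSp(H)$-equivariant graded Lie algebra section $s$ of $\d(\pi)$ is determined by its weight $-1$ and weight $-2$ components. Recall that the kernel of $\d(\pi)$ is $\Gr^W_\bullet\p/W_{-3}$, whose weight $-1$ and weight $-2$ pieces are a copy $H_0\cong H$ of $\Gr^W_{-1}\p$ and a copy $\Lambda^2_0H_0\cong\Lambda^2_0H$ of $\Gr^W_{-2}\p$ (Proposition~\ref{low degree p rep}). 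Using the explicit $\GSp(H)$-decompositions of $\Gr^W_{-1}\d_{g,n}$ and $\Gr^W_{-2}\d_{g,n}$ stated above together with Schur's lemma---the irreducibles $\Lambda^3_0H$, $H$, and $\Lambda^2_0H$ being pairwise non-isomorphic---the section must restrict to the identity on the $\Lambda^3_0H$-summand and, modulo $H_0$, to the identity on each $H_j$. Hence $s$ is recorded by scalars $c_1,\dots,c_n\in\Ql$ and $d_1,\dots,d_n\in\Ql$ with $s(v_j)=v_j+c_jv_0$ on $\Gr^W_{-1}$ and $s(w_j)=w_j+d_jw_0$ on $\Gr^W_{-2}$, where $v_0\in H_0$ and $w_0\in\Lambda^2_0H_0$ denote the kernel copies of $v_j$ and $w_j$.

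Next I would impose $s([X,Y])=[s(X),s(Y)]$ for $X,Y\in\Gr^W_{-1}$, the only nontrivial brackets being the maps $\Gr^W_{-1}\otimes\Gr^W_{-1}\to\Gr^W_{-2}$. Evaluating on $X\in\Lambda^3_0H$ and $Y=v_j$ and using that the contraction $\Lambda^3_0H\otimes H\to\Lambda^2_0H$ underlying the bracket is nonzero gives $d_j=c_j$. Evaluating on $X=v_i$, $Y=v_j$ with $i\neq j$, and using that the cross-brackets $[H_i,H_j]$ vanish for distinct points while $[H_0,H_0]\neq 0$, gives $c_ic_j=0$. Finally, evaluating on $X,Y\in\Lambda^3_0H$ and using that $[\Lambda^3_0H,\Lambda^3_0H]$ is nonzero and distributed equally over all of the summands $\Lambda^2_0H_0,\dots,\Lambda^2_0H_n$ of $\Gr^W_{-2}\d_{\cC_{g,n}}$ gives, after substituting $d_j=c_j$, the relation $\sum_{j=1}^n c_j=1$. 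These three relations force exactly one $c_j$ to equal $1$ and the others to vanish; comparing with the explicit description of $\d(s_j)$---which sets the extra point equal to $x_j$, so that $v_j\mapsto v_j+v_0$---identifies these $n$ solutions with $\d(s_1),\dots,\d(s_n)$.

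The bracket-structure inputs are imported from characteristic zero. By the weight-filtration-preserving $\GSp(H)$-equivariant isomorphisms of Propositions~\ref{comparison}(iii) and~\ref{iso in weight -1 and -2}, together with their variant for $\d_{\cC_{g,n}}$, the graded Lie algebras $\d_{g,n}$ and $\d_{\cC_{g,n}}$ agree with their $\bar\Q$-counterparts, whose weight $-1$ and weight $-2$ brackets Hain computed in \cite[\S9--10]{hain2}; this supplies the contraction $\Lambda^3_0H\otimes H\to\Lambda^2_0H$, the vanishing of the cross-brackets, and the nonvanishing of $[\Lambda^3_0H,\Lambda^3_0H]$. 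The equal distribution of $[\Lambda^3_0H,\Lambda^3_0H]$ over the point-summands then follows from the $S_{n+1}$-symmetry of $\d_{\cC_{g,n}}\cong\d_{g,n+1}$, since permuting the $n+1$ points is an automorphism of this Lie algebra.

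The main obstacle is precisely this last input: that $[\Lambda^3_0H,\Lambda^3_0H]$ meets the kernel summand $\Lambda^2_0H_0$. This is what yields $\sum_{j=1}^n c_j=1$ and thereby excludes the spurious ``trivial'' section $c_1=\dots=c_n=0$ that the first two relations alone would permit; without it one would count $n+1$ sections rather than $n$. Establishing it rests on the multiplicity-one fact $\Hom_{\GSp(H)}(\Lambda^2(\Lambda^3_0H),\Lambda^2_0H)=\Ql$ having nonzero image among the quadratic relations of $\d_{g,n}$, which is exactly where the hypotheses $g\geq 4$ and $n\geq 1$ enter. I expect the representation-theoretic verification of this nonvanishing (and of the pairwise non-isomorphism and irreducibility used throughout) to be the technical heart of the argument, with everything else reducing to Schur's lemma and the bracket bookkeeping above.
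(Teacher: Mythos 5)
The paper gives no proof of this proposition at all---it is imported directly from \cite[Props.~10.4 \& 10.8]{hain2} and \cite[Prop.~11.3]{wat1}---and your argument is a correct reconstruction of the proof given in those references: Schur's lemma reduces a $\GSp(H)$-equivariant graded section to scalars $(c_j,d_j)$, the bracket constraints force $d_j=c_j$, $c_ic_j=0$ for $i\neq j$, and $\sum_j c_j=1$, and the crux---the nonvanishing of the $\Lambda^2_0H$-component of $[\Lambda^3_0H,\Lambda^3_0H]$, which is equivalent to the emptiness of the section set when $n=0$---is imported from Hain's characteristic-zero computation and transported to characteristic $p$ by the comparison isomorphisms, exactly as in \cite{wat1}. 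So your proposal is correct and follows essentially the same route as the proofs the paper cites.
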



\section{Non-abelian cohomology of $\pi_1(\M_{g,n/k}[\ell^r], \etabar)$ }\label{non-ab coho of arith} 
\subsection{Definition} Assume that $g\geq 3$. 
Let $k$ be a finite field with characteristic $p$ and $\ell$ a prime number distinct from $p$.  
Let $r$ be a nonnegative integer. Here set $\G^\arith_{g,n}[\ell^r] := \pi_1(\M_{g,n/k}[\ell^r], \etabar)$. 
 Now, from the exact sequence \begin{equation}\label{6th exact seq for p}1\to \cP\to \cG_{\cC_{g,n}}[\ell^r]\to \cG_{g,n}[\ell^r]\to 1,\end{equation} the conjugation action of  $ \cG_{\cC_{g,n}}[\ell^r]$ on $\cP$ induces the commutative diagram
$$\xymatrix@C=1pc @R=.7pc{
	 		1\ar[r]&\cP\ar@{=}[d]\ar[r]&\cG_{\cC_{g,n}}[\ell^r]\ar[r]\ar[d]&\cG_{g,n}[\ell^r]\ar[d]\ar[r]&1\\
		1\ar[r]&\cP\ar[r]&\Aut(\cP)\ar[r]               &\Out(\cP)\ar[r]                 &1.
	}
	$$
	The right-hand vertical  is the map $\phi:\cG_{g,n}[\ell^r]\to \Out(\cP)$ given in  the introduction. Since $\cG_{\phi}$ is the fiber product $\Aut(\cP)\times_{\Out(\cP), \phi}\cG_{g,n}[\ell^r]$, there is the commutative diagram
$$\xymatrix@C=1pc @R=.7pc{
	 		1\ar[r]&\cP\ar@{=}[d]\ar[r]&\cG_{\cC_{g,n}}[\ell^r]\ar[r]\ar[d]&\cG_{g,n}[\ell^r]\ar@{=}[d]\ar[r]&1\\
		1\ar[r]&\cP\ar[r]&\cG_{\phi}\ar[r]&\cG_{g,n}[\ell^r]\ar[r]&1.	}
	$$	
\textcolor{black}{\begin{lemma}\label{exact seq with A}
For each $\Ql$-algebra $A$, the sequences 
$$
1\to\cP(A)\to \cG_{\cC_{g,n}}[\ell^r](A)\to \cG_{g,n}[\ell^r](A)\to 1
$$
and
$$
1\to\cP^o(A)\to\cG_{g,n+1}[\ell^r](A)\to\cG_{g,n}[\ell^r](A)\to 1.
$$
are exact. 
\end{lemma}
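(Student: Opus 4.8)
The plan is to deduce both assertions from a single general fact, applied twice. Recall that the two sequences
$$1\to \cP \to \cG_{\cC_{g,n}}[\ell^r]\to \cG_{g,n}[\ell^r]\to 1 \quad\text{and}\quad 1\to \cP^o \to \cG_{g,n+1}[\ell^r]\to \cG_{g,n}[\ell^r]\to 1$$
are already known to be exact as sequences of proalgebraic $\Ql$-groups (by the diagram $(a)$ in \S\ref{key seq} together with \cite[Prop.~7.6]{wat1}, and by Proposition \ref{exact seq for punctured universal curve}), and that in each case the kernel, $\cP$ or $\cP^o$, is prounipotent. So I would isolate and prove the general statement: if $1\to U\to G\to Q\to 1$ is an exact sequence of affine $\Ql$-group schemes with $U$ prounipotent, then $1\to U(A)\to G(A)\to Q(A)\to 1$ is exact for every $\Ql$-algebra $A$. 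Left exactness here is formal: since $U=\ker(G\to Q)$ is the fibre product $G\times_{Q}\{e\}$ and the functor of $A$-points commutes with fibre products, one has $U(A)=\ker\big(G(A)\to Q(A)\big)$, with $U(A)\to G(A)$ injective because $U\to G$ is a closed immersion. All of the content therefore lies in the surjectivity of $G(A)\to Q(A)$.

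For surjectivity I would use that the projection $G\to Q$ is faithfully flat with kernel $U$, hence exhibits $G$ as a $U$-torsor over $Q$. Given $q\in Q(A)$, regarded as a morphism $\Spec A\to Q$, pulling this torsor back along $q$ yields a $U_A$-torsor $P$ over $\Spec A$, and a lift of $q$ to $G(A)$ is exactly a section in $P(A)$; so it suffices to show $P(A)\neq\emptyset$. The key input is that torsors under a unipotent group over an affine base are trivial: any finite-dimensional unipotent $\Ql$-group is a successive extension of copies of $\mathbb{G}_a$, so dévissage in nonabelian flat cohomology from $H^1_{\mathrm{fppf}}(\Spec A,\mathbb{G}_a)=H^1(\Spec A,\O_{\Spec A})=0$ forces $H^1_{\mathrm{fppf}}(\Spec A,W)=\{\ast\}$ for every finite-dimensional unipotent $W$.

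To handle the \emph{pro}unipotent case I would pass to the limit along the lower central series of $U$. Its successive terms are characteristic, hence stable under conjugation by $G$, so the quotients $U_i:=U/U^{(i)}$ are finite-dimensional unipotent $\Ql$-groups with $G$-stable kernels; this gives $G=\varprojlim_i G/U^{(i)}$ and a compatible decomposition $P=\varprojlim_i P_i$, where each $P_i$ is a $U_i$-torsor over $\Spec A$. By the previous paragraph each $P_i$ is trivial, so $P_i(A)\neq\emptyset$, and the transition maps $P_{i+1}(A)\to P_i(A)$ are equivariant over the homomorphisms $U_{i+1}(A)\to U_i(A)$, which are surjective because their kernels are unipotent (the same cohomological vanishing). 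An inverse system of nonempty sets with surjective transition maps over a countable directed index set has nonempty limit, whence $P(A)=\varprojlim_i P_i(A)\neq\emptyset$; any element furnishes the desired lift. Applying this with $U=\cP$ gives the first sequence and with $U=\cP^o$ the second.

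I expect the only real obstacle to be the surjectivity step: specifically, organizing the dévissage so that the finite-dimensional unipotent quotients $U_i$ are genuinely $G$-stable (so that the torsors $P_i$ are defined over $\Spec A$) and checking the Mittag-Leffler condition that makes $\varprojlim_i P_i(A)$ nonempty. By contrast, left exactness and the triviality of a single unipotent torsor over an affine base are routine.
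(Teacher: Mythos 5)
Your proof is correct, but it takes a genuinely different route from the paper's. The paper also observes that left exactness is formal and that everything rests on surjectivity of $\cG_{\cC_{g,n}}[\ell^r](A)\to \cG_{g,n}[\ell^r](A)$, but it disposes of that step by fixing a Levi splitting of $\cG_{\cC_{g,n}}[\ell^r]\to \GSp(H)$ (which exists by the generalized Levi theorem for weighted completions quoted in \S3.1, and which induces a compatible splitting of $\cG_{g,n}[\ell^r]\to \GSp(H)$); this identifies the two groups of $A$-points with $\U_{\cC_{g,n}}[\ell^r](A)\rtimes \GSp(H\otimes_\Ql A)$ and $\U_{g,n}[\ell^r](A)\rtimes \GSp(H\otimes_\Ql A)$, reducing the problem to surjectivity of $\U_{\cC_{g,n}}[\ell^r](A)\to\U_{g,n}[\ell^r](A)$, which follows because the log maps identify $\U(A)$ with $\u\otimes_\Ql A$ and a surjection of pro-finite-dimensional Lie algebras stays surjective after $\otimes_\Ql A$. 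So the paper linearizes via exp/log after splitting off the reductive part, whereas you work directly with the kernel $\cP$ (resp.\ $\cP^o$) and prove surjectivity by torsor-theoretic d\'evissage: faithful flatness of the quotient map, triviality of torsors under finite-dimensional unipotent groups over an affine base via $H^1_{\mathrm{fppf}}(\Spec A,\mathbb{G}_a)=0$, and a Mittag--Leffler argument along the lower central series. Your approach is more general in spirit (it needs no Levi decomposition and no reference to the reductive quotient at all), while the paper's is shorter given the toolkit it has already set up, and it handles arbitrary prounipotent radicals with no finiteness hypothesis. One caveat on your version: the assertion that the lower central series quotients $U/U^{(i)}$ are finite dimensional is not true for an arbitrary prounipotent group (e.g.\ an infinite product of copies of $\mathbb{G}_a$), so your ``general fact'' as stated needs either that hypothesis added or a replacement of the lower central series by a cofinal system of $G$-stable finite-codimension subgroups; in the actual application this is harmless, since $\cP$ and $\cP^o$ are unipotent completions of finitely generated (surface or free) groups, whose $H_1$ is finite dimensional, so each $\Gr^W$ (equivalently, lower central series) quotient is indeed finite dimensional.
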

\begin{proof} For each $\Ql$-algebra $A$, the sequence $1\to \cP(A)\to \cG_{\cC_{g,n}}[\ell^r](A)\to \cG_{g,n}[\ell^r](A)$ is exact. Hence it remains to show that $\cG_{\cC_{g,n}}[\ell^r](A)\to \cG_{g,n}[\ell^r](A)$ is surjective. Fix a splitting  of $\cG_{\cC_{g,n}}\to \GSp(H)$ in the category of $\Ql$-groups. This induces a splitting of $\cG_{g,n}[\ell^r]\to \GSp(H)$.  Set $H_A:= H\otimes_\Ql A$. These splittings yield isomorphisms $\cG_{\cC_{g,n}}[\ell^r](A)\cong \U_{\cC_{g,n}}[\ell^r](A)\rtimes \GSp(H_A)$ and $\cG_{g,n}(A)\cong \U_{g,n}[\ell^r](A)\rtimes \GSp(H_A)$, which are compatible with the map $\cG_{\cC_{g,n}}[\ell^r](A)\to \cG_{g,n}[\ell^r](A)$. 
The surjectivity of  the homomorphism $\U_{\cC_{g,n}}[\ell^r]\to \U_{g,n}[\ell^r]$ of prounipotent groups implies that the Lie algebra map $\u_{\cC_{g,n}}[\ell^r]\otimes_\Ql A\to \u_{g,n}[\ell^r]\otimes_\Ql A$ is surjective, and so is the map $\U_{\cC_{g,n}}[\ell^r](A)\to \U_{g,n}[\ell^r](A)$, because the log maps $\U_{\cC_{g,n}}[\ell^r](A)\to \u_{\cC_{g,n}}[\ell^r]\otimes_\Ql A$ and $\U_{g,n}[\ell^r](A)\to  \u_{g,n}[\ell^r]\otimes_\Ql A$ are bijections. Therefore, the map $\cG_{\cC_{g,n}}[\ell^r](A)\to \cG_{g,n}[\ell^r](A)$ is surjective. A similar argument applies to the second sequence. 
\end{proof}}
The exactness of the first sequence in Lemma \ref{exact seq with A} implies that for each $\Ql$-algebra $A$, the map $\cG_{\phi}(A)\to \cG_{g,n}[\ell^r](A)$ is surjective, and the induced homomorphism $\cG_{\cC_{g,n}}[\ell^r]\to \cG_{\phi}$ is an isomorphism.  Pulling back the exact sequence
$$1\to\cP(\Ql)\to \cG_{\cC_{g,n}}[\ell^r](\Ql)\to \cG_{g,n}[\ell^r](\Ql)\to 1$$
along the representation $\tilde{\rho}_\etabar: \G^\arith_{g,n}[\ell^r]\to \cG_{g,n}[\ell^r](\Ql)$, 
we obtain an extension
$$1\to \cP(\Ql)\to \cE_{g,n}\to \G^\arith_{g,n}[\ell^r]\to 1$$
of $\G^\arith_{g,n}[\ell^r]$ by $\cP(\Ql)$. 
Recall that  the weight filtration $W_\bullet\p$ agrees with the lower central series of $\p$. So the filtration $W_\bullet \cP$ induced by the exponential function on $\p$ is also the lower central series of $\cP$.  So each  normal subgroup $W_m\cP$ of $\cP$ is also normal in $\cG_{\cC_{g,n}}[\ell^r]$. For each $N\leq -1$, pushing down the exact sequence (\ref{6th exact seq for p}) along $\cP\to \cP/W_N\cP$, we obtain the exact sequence
\begin{equation}\label{7th exact seq for p}1\to \cP/W_N\cP \to \cG_{\cC_{g,n}}[\ell^r]/W_N\cP \to \cG_{g,n}[\ell^r]\to 1.\end{equation}
Pulling back the exact sequence $(\ref{7th exact seq for p})$ along $\tilde{\rho}_\etabar$ gives an extension
$$1\to (\cP/W_N\cP)(\Ql)\to \cE^N_{g,n}\to \G^\arith_{g,n}[\ell^r]\to 1$$
of $\G^\arith_{g,n}[\ell^r]$ by $(\cP/W_N\cP)(\Ql)$. Define the set of $\cP(\Ql)$-conjugacy classes of \textcolor{black}{continuous} sections of $\cE_{g,n}\to \G^\arith_{g,n}[\ell^r]$ by $H^1_\nab(\G^\arith_{g,n}[\ell^r], \cP(\Ql))$. Similarly, for each $N\leq -1$,  define the set of $(\cP/W_N\cP)(\Ql)$-conjugacy classes of \textcolor{black}{continuous} sections of $\cE^N_{g,n}\to \G^\arith_{g,n}[\ell^r]$ by $H^1_\nab(\G^\arith_{g,n}[\ell^r], \cP/W_N\cP(\Ql))$. 
\textcolor{black}{\begin{variant} Using Proposition \ref{exact seq for punctured universal curve}, we can apply a similar construction to the universal punctured curve  $\pi^o$ and we have $H^1_\nab(\G^\arith_{g,n}[\ell^r], \cP^o(\Ql))$. 
\end{variant}}
\subsection{Non-abelian cohomology scheme of $\cG_{g,n}[\ell^r]$}Denote the Lie algebras of  $\cG_{\cC_{g,n}}[\ell^r]$,  $\U_{\cC_{g,n}}[\ell^r]$, $\cG_{g,n}[\ell^r]$, $\U_{g,n}[\ell^r]$, and $\GSp(H)$,by $\g_{\cC_{g,n}}[\ell^r]$, $\u_{\cC_{g,n}}[\ell^r]$, $\g_{g,n}[\ell^r]$,  $\u_{g,n}[\ell^r]$, and $\r$, respectively.  A spectral sequence produced from the extension $0\to \u_{g,n}[\ell^r]\to\g_{g,n}[\ell^r]\to \r\to0$ implies that for each finite dimensional $\GSp(H)$-module $V$, there are isomorphisms
$$H^j(\g_{g,n}[\ell^r], V)\cong H^0(\r, H^j(\u_{g,n}[\ell^r])\otimes V)\cong \Hom_{\GSp(H)}(H_j(\u_{g,n}[\ell^r]), V). $$
\begin{proposition}\label{condition for existence} For $g\geq 3$ and $n\geq 0$, we have
$$H^1(\g_{g,n}[\ell^r], \Gr^W_m\p) \cong \begin{cases} 
     \oplus_{i=1}^n\Ql & m =-1 \\
    0& m <-1. \\
       \end{cases} $$
\end{proposition}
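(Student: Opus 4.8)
The plan is to rewrite the Lie-algebra cohomology as a space of $\GSp(H)$-equivariant homomorphisms and then to determine the relevant graded pieces of $H_1(\u_{g,n}[\ell^r])$. Each $\Gr^W_m\p$ is a finite dimensional $\GSp(H)$-module, since by Proposition \ref{weight filt}(ii) the $\cG$-action on associated graded pieces factors through $\GSp(H)$; hence the isomorphism coming from the spectral sequence of $0\to\u_{g,n}[\ell^r]\to\g_{g,n}[\ell^r]\to\r\to0$ displayed just above applies with $V=\Gr^W_m\p$ and gives
$$H^1(\g_{g,n}[\ell^r], \Gr^W_m\p)\cong \Hom_{\GSp(H)}(H_1(\u_{g,n}[\ell^r]), \Gr^W_m\p).$$
Since every $\GSp(H)$-equivariant map preserves the action of the central cocharacter $\w$, hence the weight, and $\Gr^W_m\p$ is pure of weight $m$, only the weight $m$ part of $H_1(\u_{g,n}[\ell^r])$ contributes. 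Using the exactness of $\Gr^W_\bullet$ (Proposition \ref{weight filt}(iii)), which also yields $\Gr^W_\bullet H_1(\u_{g,n}[\ell^r])\cong H_1(\Gr^W_\bullet\u_{g,n}[\ell^r])$, this reduces the problem to computing $\Hom_{\GSp(H)}(\Gr^W_m H_1(\u_{g,n}[\ell^r]), \Gr^W_m\p)$.

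The heart of the argument is to show that $H_1(\u_{g,n}[\ell^r])$ is concentrated in weight $-1$. For the geometric completion, Proposition \ref{comparison}(ii) and (iii) identify $\Gr^W_\bullet\u^\geom_{g,n}[\ell^r]$ with its characteristic zero counterpart in a weight-preserving way, and there Hain's theorem that the relative completion is generated in degree one for $g\geq3$ \cite{hain2} shows that $\Gr^W_\bullet\u^\geom_{g,n}[\ell^r]$ is generated as a Lie algebra by its weight $-1$ part, so $H_1(\u^\geom_{g,n}[\ell^r])$ is concentrated in weight $-1$. The surjection of graded Lie algebras $\Gr(d\beta)\colon\Gr^W_\bullet\u^\geom_{g,n}[\ell^r]\to\Gr^W_\bullet\u_{g,n}[\ell^r]$ then carries generators to generators, so $\Gr^W_\bullet\u_{g,n}[\ell^r]$ is likewise generated in weight $-1$ and $H_1(\u_{g,n}[\ell^r])$ is concentrated in weight $-1$. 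Finally Proposition \ref{weight -1 and -2 injection}, together with the computation of $\Gr^W_{-1}\d_{g,n}$ recorded in Section \ref{two-step}, identifies its weight $-1$ part as $\Gr^W_{-1}H_1(\u_{g,n}[\ell^r])\cong\Lambda^3_0H\oplus\bigoplus_{j=1}^nH_j$.

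The two cases then follow at once. For $m\leq-2$ the weight $m$ part of $H_1(\u_{g,n}[\ell^r])$ vanishes, so the Hom-space is zero and $H^1(\g_{g,n}[\ell^r], \Gr^W_m\p)=0$. For $m=-1$ we have $\Gr^W_{-1}\p\cong H$ by Proposition \ref{low degree p rep}, and
$$\Hom_{\GSp(H)}\Bigl(\Lambda^3_0H\oplus\bigoplus_{j=1}^nH_j,\ H\Bigr)\cong\bigoplus_{i=1}^n\Ql$$
by Schur's lemma, because for $g\geq3$ the irreducible $\GSp(H)$-modules $\Lambda^3_0H$ and $H$ are non-isomorphic while each $H_j$ is a copy of $H$. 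This gives the stated answer.

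I expect the main obstacle to be the central step, namely that the unipotent radical of the relative completion has no generators in weights $\leq-2$. This is exactly Hain's degree-one generation theorem, and the only genuine work is to transport it to positive characteristic, for which the weight-preserving comparison isomorphisms of Proposition \ref{comparison} are designed; the surjectivity of $\Gr(d\beta)$, which rests on the triviality of $\cN_k$ from Proposition \ref{weighted comp for G_k}, then propagates the conclusion from the geometric to the arithmetic completion.
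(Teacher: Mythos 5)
Your proposal is correct and follows essentially the same route as the paper: the spectral-sequence identification $H^1(\g_{g,n}[\ell^r], V)\cong \Hom_{\GSp(H)}(H_1(\u_{g,n}[\ell^r]), V)$, purity of $H_1(\u_{g,n}[\ell^r])$ in weight $-1$ deduced from the characteristic-zero comparison (Proposition \ref{comparison}, Hain's Theorem 9.11) together with the surjectivity of $d\beta$, and then Schur's lemma. Your phrasing via generation in weight $-1$ and your identification of the weight $-1$ piece through $\d_{g,n}$ are only cosmetic variants of the paper's direct purity argument and citation of \cite[Thm.~9.11]{hain2}.
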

\begin{proof} Recall that the Lie algebra map $d\beta :\u^\geom_{g,n}[\ell^r]\to \u_{g,n}[\ell^r]$ is surjective and the induced graded Lie algebra map $\Gr(d\beta)$ is an isomorphism in weight $-1$ and $-2$. Since $H_1(\u^\geom_{g,n}[\ell^r])\cong H_1(\u^\geom_{g,n})$  and $H_1(\u^\geom_{g,n})$ is pure of weight $-1$ by \cite[Thm.~9.11]{hain2}, the surjectivity of $d\beta$ implies that $H_1(\u_{g,n}[\ell^r])$ is also pure of weight $-1$. We have $H_1(\u) =\Gr^W_{-1}H_1(\u)$ for $\u \in \{\u^\geom_{g,n}[\ell^r], \u_{g,n}[\ell^r]\}$ and there is a commutative diagram
$$\xymatrix@C=1pc @R=.7pc{
\Gr^W_{-1}\u^\geom_{g,n}[\ell^r]\ar[r]^\sim\ar[d] &\Gr^W_{-1}\u_{g,n}[\ell^r]\ar[d]\\
H_1(\u^\geom_{g,n}[\ell^r])\ar[r] & H_1(\u_{g,n}[\ell^r]),
	 			}
	$$	
where the vertical maps are isomorphisms and the top map is $\Gr_{-1}(d\beta)$. Thus the bottom map in the diagram is an isomorphism. 
Here, we have  isomorphisms
\begin{align*}H^1(\g_{g,n}[\ell^r], \Gr^W_m\p) &\cong \Hom_{\GSp(H)}(H_1(\u_{g,n}[\ell^r]), \Gr^W_m\p)\\
&\cong \Hom_{\GSp(H)}(\Gr^W_mH_1(\u^\geom_{g,n}), \Gr^W_m\p)\\
&\cong \begin{cases}
\oplus_{i=1}^n\Ql & m =-1, \\
    0& m <-1 \\
    \end{cases}
\end{align*}
where the last isomorphism follows from \cite[Thm.~9.11]{hain2}. 
\end{proof}
Proposition \ref{condition for existence} shows that $H^1(\g_{g,n}[\ell^r], \Gr^W_m\p)$ is of finite dimensional for all $m\leq -1$. Therefore, it follows from a result of Hain \cite[Thm.~4.6]{hain4}  that for each $N\leq -1$, there exists an affine $\Ql$-scheme of finite type  $H^1_\nab( \cG_{g,n}[\ell^r], \cP/W_{N}\cP)$ that represents the functor associating to each $\Ql$-algebra $A$ the set of $(\cP/W_{N}\cP)(A)$-conjugacy classes of \textcolor{black}{continuous} sections of $\cG_{\cC_{g,n}}[\ell^r]/W_{N}\cP\otimes_\Ql A\to \cG_{g,n}[\ell^r]\otimes_\Ql A$.
Applying the exact functor $\Gr^W_\bullet$ to the exact sequence (\ref{7th exact seq for p}) gives the exact sequence of the corresponding associated graded Lie algebras 
$$0\to \Gr^W_\bullet\p/W_N\p\to\Gr^W_\bullet\g_{\cC_{g,n}}[\ell^r]/W_N\p\to\Gr^W_\bullet\g_{g,n}[\ell^r]\to 0.$$
For $N \leq -1$, denote by $\mathrm{Sect}_{\GSp(H)}(\Gr^W_\bullet\g_{g,n}[\ell^r], \Gr^W_\bullet\p/W_{N}\p)$ the set of $\GSp(H)$-equivariant graded Lie algebra sections of $\Gr^W_\bullet\g_{\cC_{g,n}}[\ell^r]/W_{N}\p\to\Gr^W_\bullet\g_{g,n}[\ell^r]$.
\begin{proposition}[{\cite[Thm.~4.6 \& Cor.~4.7]{hain4}}]\label{nonabelian cohomology}With  notation as above, 
there are bijections
	$$H^1_\nab( \cG_{g,n}[\ell^r], \cP/W_{N}\cP)(\Ql)\cong \mathrm{Sect}_{\GSp(H)}(\Gr^W_\bullet\g_{g,n}[\ell^r], \Gr^W_\bullet\p/W_{N}\p),$$
	and 
$$ H^1_\nab(\cG_{g,n}[\ell^r], \cP)(\Ql)\cong \varprojlim_{N\leq -1} H^1_\nab( \cG_{g,n}[\ell^r], \cP/W_{N}\cP)(\Ql).$$
\end{proposition}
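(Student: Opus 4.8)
The plan is to establish the first bijection and then deduce the second by passing to the inverse limit over $N$.

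First I would fix a Levi splitting $\GSp(H)\hookrightarrow\cG_{g,n}[\ell^r]$ together with a compatible one $\GSp(H)\hookrightarrow\cG_{\cC_{g,n}}[\ell^r]/W_N\cP$; these exist by the generalized Levi theorem recalled in \S\ref{weight} and are unique up to conjugation by the prounipotent radical. For a $\Ql$-algebra $A$ and a continuous section $s$ of $\cG_{\cC_{g,n}}[\ell^r]/W_N\cP\otimes_\Ql A\to\cG_{g,n}[\ell^r]\otimes_\Ql A$, the restriction $s|_{\GSp(H)}$ is a Levi of the source; since the kernel $\cP/W_N\cP$ is prounipotent, it is conjugate under $(\cP/W_N\cP)(A)$ to the fixed Levi. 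Hence every $(\cP/W_N\cP)(A)$-conjugacy class has a $\GSp(H)$-equivariant representative, unique up to conjugation by the centralizer $(\cP/W_N\cP)^{\GSp(H)}(A)$. Using that the logarithm is a bijection for prounipotent groups over a field of characteristic zero, this reduces the problem to classifying $\GSp(H)$-equivariant Lie algebra sections of $\u_{\cC_{g,n}}[\ell^r]/W_N\p\to\u_{g,n}[\ell^r]$ modulo the relevant adjoint action.

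Next I would compute this set by dévissage along the weight filtration, writing $\cP/W_N\cP$ as an iterated extension of its graded pieces $\Gr^W_m\p$ (for $N<m\le -1$) and lifting an equivariant section through one graded layer at a time. At each step, because $\GSp(H)$ is reductive and $\Gr^W_\bullet$ is exact with the action on graded quotients factoring through the Levi (Proposition \ref{weight filt}), the obstruction to lifting lies in $H^2(\g_{g,n}[\ell^r], \Gr^W_m\p)$ and the set of lifts, when nonempty, is a torsor under $H^1(\g_{g,n}[\ell^r], \Gr^W_m\p)$ after accounting for conjugation. Via the spectral-sequence identification $H^j(\g_{g,n}[\ell^r], V)\cong\Hom_{\GSp(H)}(H_j(\u_{g,n}[\ell^r]), V)$ established above, together with Proposition \ref{condition for existence}, these groups are finite-dimensional and reduce to linear algebra of $\GSp(H)$-representations in negative weight; the strict negativity of the weights of $\p$ is what forces the obstructions to be accounted for exactly by the graded data. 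Assembling these steps shows that passing to $\Gr^W_\bullet$ sends each equivariant section to a $\GSp(H)$-equivariant graded Lie algebra section of $\Gr^W_\bullet\g_{\cC_{g,n}}[\ell^r]/W_N\p\to\Gr^W_\bullet\g_{g,n}[\ell^r]$, and that this assignment is a bijection onto $\mathrm{Sect}_{\GSp(H)}(\Gr^W_\bullet\g_{g,n}[\ell^r], \Gr^W_\bullet\p/W_N\p)$. The finite-dimensionality from Proposition \ref{condition for existence} guarantees representability by an affine $\Ql$-scheme of finite type via \cite[Thm.~4.6]{hain4}, so the identification holds on $\Ql$-points.

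Finally, for the second bijection I would pass to the inverse limit. The quotient maps $\cP/W_N\cP\to\cP/W_{N+1}\cP$ induce compatible maps on conjugacy classes of sections, and a continuous section of $\cE_{g,n}\to\G^\arith_{g,n}[\ell^r]$ is the same as a compatible system of sections modulo $W_N\cP$ for all $N$; the $\cP(\Ql)$-conjugacy classes are then computed as $\varprojlim_{N\le -1}$ of the finite-level sets, giving the stated isomorphism. The main obstacle is the bijectivity in the inductive step: one must show that two non-conjugate equivariant sections cannot share the same associated graded and that every graded section is realized. This is precisely where the reductivity of $\GSp(H)$ (to split the weight filtration $\GSp(H)$-equivariantly) and the strictly negative weights of $\p$ (to kill the higher obstructions beyond the graded data) are indispensable.
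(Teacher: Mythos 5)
The paper does not actually prove this proposition: it is quoted directly from Hain \cite[Thm.~4.6 \& Cor.~4.7]{hain4}, and the paper's only task is to verify the hypothesis of that theorem, namely the finite-dimensionality of $H^1(\g_{g,n}[\ell^r], \Gr^W_m\p)$ supplied by Proposition \ref{condition for existence}. So you are in effect reproving Hain's general theorem, which is legitimate, and your first reduction (conjugate any section into one compatible with a fixed Levi $\GSp(H)\hookrightarrow \cG_{g,n}[\ell^r]$, then take logarithms to pass to $\GSp(H)$-equivariant Lie algebra sections) is indeed how that argument begins. But the core of your proof has two genuine gaps.

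First, the d\'evissage in your third paragraph does not establish the stated bijection. Obstruction theory relates sections modulo $W_N$ to sections modulo $W_{N+1}$ by a torsor/obstruction picture, but the proposition asserts a bijection with the set of \emph{graded} sections, and you never exhibit the mechanism converting filtered data into graded data; the sentence ``assembling these steps shows\ldots'' is precisely the claim requiring proof. The actual mechanism is the central cocharacter $\omega:\Gm\to\GSp(H)$: once a Levi is fixed, the induced $\Gm$-action splits the weight filtrations on $\u_{\cC_{g,n}}[\ell^r]/W_N\p$ and on $\u_{g,n}[\ell^r]$, identifying each with its associated graded as a graded Lie algebra; any $\GSp(H)$-equivariant section commutes with this $\Gm$-action, hence preserves the weight decompositions and \emph{is} a graded section. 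This single observation replaces the entire inductive lifting argument. Second, you say the equivariant representative is unique up to conjugation by the centralizer $(\cP/W_N\cP)^{\GSp(H)}(A)$ and later quotient by ``the relevant adjoint action,'' but you never compute this group. It is trivial: an invariant element is in particular fixed by $\Gm$, hence of weight $0$, while $\p/W_N\p$ has only strictly negative weights. Without this, your bijection would be with graded sections modulo an unidentified action rather than with $\mathrm{Sect}_{\GSp(H)}(\Gr^W_\bullet\g_{g,n}[\ell^r], \Gr^W_\bullet\p/W_{N}\p)$ itself, and your inverse-limit step would face a genuine $\varprojlim^1$-type problem (an inverse limit of quotients by group actions need not equal the quotient of the inverse limit); triviality of the stabilizers is exactly what makes the compatible-systems argument work. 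Finally, a smaller slip: the second bijection concerns sections of the proalgebraic extension $\cG_{\cC_{g,n}}[\ell^r]\to\cG_{g,n}[\ell^r]$, not of the profinite extension $\cE_{g,n}\to\G^\arith_{g,n}[\ell^r]$; the comparison with the latter is the separate Proposition \ref{non-abelian coh iso}.
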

The following result allows us to use the exact sequence for non-abelian cohomology schemes \cite[\S4.4]{hain4}.  It follows  from the universal property of weighted completion (see \cite[Prop.~15.2]{hain2}).  
\begin{proposition}\label{non-abelian coh iso}With notation as above,  there are bijections
$$H^1_\nab(\G^\arith_{g,n}[\ell^r], \cP/W_N\cP(\Ql))\cong H^1_\nab(\cG_{g,n}[\ell^r], \cP/W_N\cP)(\Ql) \,\,\text{ for all }N\leq -1$$
and 
$$H^1_\nab(\G^\arith_{g,n}[\ell^r], \cP(\Ql))\cong H^1_\nab(\cG_{g,n}[\ell^r], \cP)(\Ql).$$
\end{proposition}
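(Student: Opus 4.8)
The plan is to produce, for each $N\le -1$, a natural bijection between the two descriptions and then pass to the inverse limit. Both sides parametrize sections modulo conjugacy: the left-hand side consists of $(\cP/W_N\cP)(\Ql)$-conjugacy classes of continuous sections of $\cE^N_{g,n}\to \G^\arith_{g,n}[\ell^r]$, equivalently of continuous homomorphisms $\tilde s:\G^\arith_{g,n}[\ell^r]\to (\cG_{\cC_{g,n}}[\ell^r]/W_N\cP)(\Ql)$ lifting $\tilde\rho_\etabar$; the right-hand side consists of $(\cP/W_N\cP)(\Ql)$-conjugacy classes of morphisms of proalgebraic $\Ql$-groups $\Phi:\cG_{g,n}[\ell^r]\to \cG_{\cC_{g,n}}[\ell^r]/W_N\cP$ splitting the projection. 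Composition with $\tilde\rho_\etabar$ sends such a $\Phi$ to the continuous homomorphism $\Phi\circ\tilde\rho_\etabar$, and this is the natural map I would prove to be bijective.

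First I would check injectivity, which is immediate from the defining property of weighted completion that $\im\tilde\rho_\etabar$ is Zariski-dense in $\cG_{g,n}[\ell^r]$: two morphisms of algebraic groups agreeing after composition with $\tilde\rho_\etabar$ agree on a dense set, hence coincide, and the same Zariski-density forces the relation ``$\Phi_1\circ\tilde\rho_\etabar$ is $(\cP/W_N\cP)(\Ql)$-conjugate to $\Phi_2\circ\tilde\rho_\etabar$'' to imply that $\Phi_1$ and $\Phi_2$ are themselves conjugate.

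The substance is surjectivity, which is where the universal property of weighted completion enters (cf.\ \cite[Prop.~15.2]{hain2}). Given a continuous $\tilde s$ lifting $\tilde\rho_\etabar$, set $G:=\cG_{\cC_{g,n}}[\ell^r]/W_N\cP$, which is a negatively weighted extension of $\GSp(H)$, being the quotient of the weighted completion $\cG_{\cC_{g,n}}[\ell^r]$ by the weight-$\le N$ part $W_N\cP$; let $N_{\tilde s}\subseteq G$ be the Zariski closure of $\im\tilde s$. Since $\tilde s$ lifts $\rho_\etabar:\G^\arith_{g,n}[\ell^r]\to\GSp(H)$ and the latter is Zariski-dense (Proposition \ref{monodromy density}), the projection $N_{\tilde s}\to \GSp(H)$ is surjective with prounipotent kernel $N_{\tilde s}\cap \U_G$, where $\U_G$ is the prounipotent radical of $G$. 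The key point I would verify is that $N_{\tilde s}$ is again a negatively weighted extension of $\GSp(H)$: choosing a Levi subgroup of $N_{\tilde s}$ lifts $\omega$ into $N_{\tilde s}$, so $N_{\tilde s}\cap\U_G$ is stable under the resulting $\Gm$-action, and because $G$ is negatively weighted every weight occurring in the Lie algebra of $\U_G$—and hence in the $\Gm$-stable subalgebra $\operatorname{Lie}(N_{\tilde s}\cap\U_G)$—is negative; as the $\GSp(H)$-action on $H_1$ of a prounipotent group is insensitive to the choice of Levi, $H_1(N_{\tilde s}\cap\U_G)$ is negatively weighted with respect to $\omega$. Since $\tilde s$ is Zariski-dense in $N_{\tilde s}$ by construction and lifts $\rho_\etabar$, the universal property of $\cG_{g,n}[\ell^r]$ yields a unique morphism $\Phi:\cG_{g,n}[\ell^r]\to N_{\tilde s}\hookrightarrow G$ with $\Phi\circ\tilde\rho_\etabar=\tilde s$; composing with the projection $G\to\cG_{g,n}[\ell^r]$ and invoking Zariski-density of $\im\tilde\rho_\etabar$ shows that $\Phi$ splits the projection, so $\tilde s$ lies in the image. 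I expect the verification that $N_{\tilde s}\cap\U_G$ carries only negative weights—so that the universal property is applicable—to be the main obstacle.

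Finally I would package this into the stated isomorphisms. The correspondence $\tilde s\leftrightarrow\Phi$ intertwines the conjugation actions of $(\cP/W_N\cP)(\Ql)$ (conjugating $\tilde s$ by $u$ replaces $N_{\tilde s}$ by $uN_{\tilde s}u^{-1}$ and $\Phi$ by $\Inn_u\circ\Phi$), giving the first displayed bijection for every $N\le-1$. These bijections are compatible with the transition maps as $N$ decreases, so taking the inverse limit—and using that a continuous section into $\cP(\Ql)=\varprojlim_N(\cP/W_N\cP)(\Ql)$ is the same as a compatible system of continuous sections into the finite-level quotients—identifies $H^1_\nab(\G^\arith_{g,n}[\ell^r],\cP(\Ql))$ with $\varprojlim_N H^1_\nab(\cG_{g,n}[\ell^r],\cP/W_N\cP)(\Ql)$, which is $H^1_\nab(\cG_{g,n}[\ell^r],\cP)(\Ql)$ by Proposition \ref{nonabelian cohomology}. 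This yields the second displayed bijection.
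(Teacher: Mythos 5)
Your finite-level argument is essentially the paper's own proof: the paper disposes of this proposition by citing the universal property of weighted completion (Hain, \cite[Prop.~15.2]{hain2}), and what you write out is exactly that argument, correctly executed. Injectivity from Zariski-density of $\im\tilde\rho_\etabar$, and surjectivity by taking the Zariski closure $N_{\tilde s}$ of the image of a continuous section, checking it is a negatively weighted extension of $\GSp(H)$ (your Levi/stable-subalgebra verification is the right one, together with Proposition \ref{monodromy density} for surjectivity onto $\GSp(H)$), and then invoking the universal property of $\cG_{g,n}[\ell^r]$ --- this is precisely the intended mechanism, so for each fixed $N\leq -1$ your bijection is sound.

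The gap is in the final paragraph. The identification
$$H^1_\nab(\G^\arith_{g,n}[\ell^r], \cP(\Ql))\cong \varprojlim_{N\leq -1} H^1_\nab(\G^\arith_{g,n}[\ell^r], \cP/W_N\cP(\Ql))$$
does not follow from the observation that a continuous section into $\cP(\Ql)=\varprojlim_N(\cP/W_N\cP)(\Ql)$ is the same as a compatible system of sections: the non-abelian $H^1$ consists of \emph{conjugacy classes} of sections, and passing to conjugacy classes does not formally commute with inverse limits. Surjectivity of the natural map requires promoting a compatible system of classes to a compatible system of actual sections (an inductive adjustment of representatives, using that $(\cP/W_M\cP)(\Ql)\to(\cP/W_N\cP)(\Ql)$ is surjective); injectivity requires showing that two sections conjugate at every finite level are conjugate on the nose, i.e.\ that the inverse limit of the nonempty transporter sets $T_N=\{u\in(\cP/W_N\cP)(\Ql): u s_N u^{-1}=s_N'\}$ is nonempty, which is not automatic since the transition maps $T_M\to T_N$ need not be surjective. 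This is exactly the kind of subtlety that Hain's theorem (Proposition \ref{nonabelian cohomology}) settles on the proalgebraic side, so its profinite analogue cannot simply be asserted. The repair stays entirely within your own framework: run the Zariski-closure/universal-property argument once more, verbatim, at the infinite level, with $G=\cG_{\cC_{g,n}}[\ell^r]$ itself (which is negatively weighted, being a weighted completion) in place of its quotients. That gives directly a bijection between $\cP(\Ql)$-conjugacy classes of continuous sections of $\cE_{g,n}\to\G^\arith_{g,n}[\ell^r]$ and $\cP(\Ql)$-conjugacy classes of proalgebraic-group sections of $\cG_{\cC_{g,n}}[\ell^r]\to\cG_{g,n}[\ell^r]$; the only inverse-limit interchange then needed is on the scheme side, where it is supplied by Proposition \ref{nonabelian cohomology}.
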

\subsection{Proof of Theorem 1} \textcolor{black}{Suppose that $p$ is a prime number, that $\ell$ is a prime number distinct from $p$, and that $r$ is a nonnegative integer. 
Let $k$ be a finite field with $\Char(k) =p$ that contains all $\ell^r$th roots of unity.  Assume that $g \geq 4$ and $n\geq 1$. }
For each $j=1,\ldots,n$, the tautological section $s_j$ of $\pi:\cC_{g,n/k}[\ell^r]\to \M_{g,n/k}[\ell^r]$ induces a section of  the projection $\cG_{\cC_{g,n}}[\ell^r]\to \cG_{g,n}[\ell^r]$  of the weighted completions and hence a section of $\cG_{\cC_{g,n}}[\ell^r]/W_N\cP\to \cG_{g,n}[\ell^r]$ for $N\leq -1$. Thus each section $s_j$ induces a class in $H^1_\nab(\cG_{g,n}[\ell^r], \cP)(\Ql)$ and $H^1_\nab(\cG_{g,n}[\ell^r], \cP/W_N\cP)(\Ql)$, both of which are denoted by $s_j^\un$.  Furthermore, the homomorphism $\tilde{\rho}_\etabar: \G^\arith_{g,n}[\ell^r]\to \cG_{g,n}[\ell^r](\Ql)$ pulls back each $s^\un_j$ to give a class in $H^1_\nab(\G^\arith_{g,n}[\ell^r], \cP(\Ql))$ and $H^1_\nab(\G^\arith_{g,n}[\ell^r], \cP/W_N\cP(\Ql))$, which are also denoted by $s^\un_j$.  
First, we show that for $g\geq 4$, there is a bijection
$H^1_\nab(\cG_{g,n}[\ell^r], \cP/W_{-3}\cP)(\Ql) = \{s^\un_1,\ldots, s^\un_n\} .$
By Proposition \ref{nonabelian cohomology}, we have 
$$H^1_\nab(\cG_{g,n}[\ell^r], \cP/W_{-3}\cP)(\Ql)\cong \mathrm{Sect}_{\GSp(H)}(\Gr^W_\bullet\g_{g,n}[\ell^r], \Gr^W_\bullet\p/W_{-3}\p).$$
 For each $j =1, \ldots, n$, denote the image of $s^\un_j$ in $\mathrm{Sect}_{\GSp(H)}(\Gr^W_\bullet\g_{g,n}[\ell^r], \Gr^W_\bullet\p/W_{-3}\p)$ by $\Gr^W_\bullet ds^\un_j$.
By Proposition \ref{weight filt}, the functors $V\mapsto\Gr^W_\bullet V $ and $V\mapsto V/W_mV$ are exact. Thus there is the exact sequece
$$0\to\Gr^W_\bullet\p/W_{-3}\to \Gr^W_\bullet\g_{\cC_{g,n}}[\ell^r]/W_{-3}\to  \Gr^W_\bullet\g_{g,n}[\ell^r]/W_{-3}\to 0, $$
where  the map $ \Gr^W_\bullet\g_{\cC_{g,n}}[\ell^r]/W_{-3}\to  \Gr^W_\bullet\g_{g,n}[\ell^r]/W_{-3}$ is denoted by $\Gr^W_{\bullet} d\pi_\ast/W_{-3}$.
 Denote by $\mathrm{Sect}_{\GSp(H)}(\Gr^W_\bullet\g_{g,n}[\ell^r]/W_{-3}, \Gr^W_\bullet\p/W_{-3})$ the set of $\GSp(H)$-equivariant graded Lie algebra sections of $\Gr^W_\bullet d\pi_\ast/W_{-3}$. Note that  $\mathrm{Sect}_{\GSp(H)}(\Gr^W_\bullet\g_{g,n}[\ell^r]/W_{-3}, \Gr^W_\bullet\p/W_{-3})$ is in bijection with the set $\mathrm{Sect}_{\GSp(H)}(\Gr^W_\bullet\g_{g,n}[\ell^r], \Gr^W_\bullet\p/W_{-3}\p).$ Fix $\GSp(H)$-module isomorphisms $\gamma:\Gr^W_{-2}\p \cong \Lambda^2_0H$ and $\alpha:\Gr^W_{-2}\g_{g,n}[\ell^r]\cong \bigoplus_{j=1}^n\Lambda^2_0H_j \oplus (\Lambda^2_0H)^\perp.$
Then in weight $-2$, there is a commutative diagram of $\GSp(H)$-modules
$$\xymatrix@C=1pc @R=.7pc{
0\ar[r] & \Gr^W_{-2}\p \ar[r]\ar[d]^{\gamma}&\Gr^W_{-2}\g_{\cC_{g,n}}[\ell^r]\ar[r]^{\Gr^W_{-2}d\pi_\ast}\ar[d]^{\cong}&\Gr^W_{-2}\g_{g,n}[\ell^r]\ar[r]\ar[d]^{\alpha}&0\\
0\ar[r] &\Lambda^2_0H \ar[r]                          & \Lambda^2_0H \oplus \bigoplus_{j=1}^n\Lambda^2_0H_j\oplus (\Lambda^2_0H)^\perp  \ar[r]  &\bigoplus_{j=1}^n\Lambda^2_0H_j \oplus (\Lambda^2_0H)^\perp \ar[r] &0,
}
$$ 
where the rows are exact and the middle vertical isomorphism is determined by $\gamma$ and $\alpha\circ\Gr^W_{-2}d\pi_\ast$. From the description of the map $\d(\pi)$ in \S\ref{two-step}, it follows that each $\GSp(H)$-equivariant graded Lie algebra section $\Gr^W_\bullet ds/W_{-3}$ of $\Gr^W_{\bullet} d\pi_\ast/W_{-3}$ induces a $\GSp(H)$-equivariant graded Lie algebra section $\d(s)$ of $\d(\pi)$. Note that the restriction of a section of $\Gr^W_\bullet d\pi_\ast/W_{-3}$ to the $(\Lambda^2_0H)^\perp$-component in weight $-2$ is independent of the choice of  a section. Therefore, there is a bijection between $\mathrm{Sect}_{\GSp(H)}(\Gr^W_\bullet\g_{g,n}[\ell^r]/W_{-3}, \Gr^W_\bullet\p/W_{-3})$ and the set of $\GSp(H)$-graded Lie algebra sections of $\d(\pi)$. By Proposition \ref{sections of dpi}, the sections of $\d(\pi)$ are given by $\d(s_1),\ldots, \d(s_n)$.   Thus it follows that $\mathrm{Sect}_{\GSp(H)}(\Gr^W_\bullet\g_{g,n}[\ell^r], \Gr^W_\bullet\p/W_{-3}\p)$ consists of exactly the sections $\Gr^W_\bullet ds^\un_1,\ldots, \Gr^W_\bullet ds^\un_n$. Hence our first claim follows. \\
Next, using the non-abelian exact sequence \cite[Thm.~3]{hain4}, we will show that for each $N\leq -4$, 
$ H^1_\nab(\cG_{g,n}, \cP/W_N\cP)(\Ql) = \{s_1^\un, \ldots, s_n^\un \}. $
Consider the following sequence given by \cite[Thm.~3]{hain4}
 $$H^1_\nab(\cG_{g,n}[\ell^r], \cP/W_{N}\cP)\to H^1_\nab(\cG_{g,n}[\ell^r], \cP/W_{N+1}\cP)\overset{\delta}\to H^2(\g_{g,n}[\ell^r], \Gr^W_{N+1}\p),$$ 
 where $H^1_\nab(\cG_{g,n}[\ell^r], \cP/W_{N}\cP)(\Ql)$ is a principal $H^1(\g_{g,n}[\ell^r], \Gr^W_{N+1}\p)$ set over the set of $\Ql$-rational points $(\delta^{-1}(0))(\Ql)$. For the definition of the map $\delta$, see \cite[\S 14.2]{hain2}. 
By Proposition \ref{condition for existence}, we have $H^1(\g_{g,n}[\ell^r], \Gr^W_{N}\p)=0$ for $N<-1$. Thus we have 
$$H^1_\nab(\cG_{g,n}, \cP/W_{N})(\Ql)\cong \{0\}\times(\delta^{-1}(0))(\Ql) \cong (\delta^{-1}(0))(\Ql).$$
The fact that each $s^\un_j$ lifts from $H^1_\nab(\cG_{g,n}[\ell^r], \cP/W_{N+1}\cP)$ to $ H^1_\nab(\cG_{g,n}[\ell^r], \cP/W_{N}\cP)$ implies that by construction, $\delta(s^\un_j) =0$ for $j =1,\ldots, n$. Therefore, for $N=-3$, we have 
$$(\delta^{-1}(0))(\Ql) = H^1_\nab(\cG_{g,n}[\ell^r], \cP/W_{-3}\cP)(\Ql) = \{s^\un_1,\ldots, s^\un_n\}.$$
Hence, inductively, we have $H^1_\nab(\cG_{g,n}, \cP/W_N\cP)(\Ql)=\{s_1^\un, \ldots, s_n^\un \}$ for all $N\leq -4$ as well. 
Since $H^1_\nab(\cG_{g,n}, \cP)(\Ql) \cong \varprojlim_{N\leq -1}H^1_\nab(\cG_{g,n}, \cP/W_N\cP)(\Ql)$, it follows that
$$H^1_\nab(\cG_{g,n}, \cP)(\Ql) = \{s_1^\un, \ldots, s_n^\un \}.$$
By Proposition \ref{non-abelian coh iso}, we have
$$H^1(\G^\arith_{g,n}[\ell^r], \cP(\Ql)) = \{s_1^\un, \ldots, s_n^\un \}.$$

\qed

\subsection{Proof of Theorem 2}  \textcolor{black}{With the same assumptions as in Theorem 1, consider the exact sequence of weighted completions associated to the universal punctured curve $\pi^o:\M_{g, n+1/k}[\ell^r]\to \M_{g,n/k}[\ell^r]$
$$
1\to \cP^o\to\cG_{g, n+1}[\ell^r]\to \cG_{g,n}[\ell^r]\to 1.
$$ }
\textcolor{black}{Proposition \ref{exact seq for punctured universal curve} implies that  $\cG^\geom_{g, n+1}[\ell^r]$ is isomorphic to the fiber product of $\cG^\geom_{g,n}[\ell^r]$ and $\cG_{g, n+1}[\ell^r]$ over $ \cG_{g,n}[\ell^r]$, and hence a section of the projection $\cG_{g, n+1}[\ell^r]\to \cG_{g,n}[\ell^r]$ induces a section of $\cG^\geom_{g, n+1}[\ell^r]\to \cG^\geom_{g,n}[\ell^r]$. }
Therefore, it will suffice to show that the sequence 
$$
1\to \cP^o\to\cG^\geom_{g, n+1}[\ell^r]\to \cG^\geom_{g,n}[\ell^r]\to 1
$$
does not split. But this directly follows from the comparison between characteristic zero and $p$ in Proposition \ref{comparison} and \cite[Thm.~1]{wat2}. 
\textcolor{black}{Secondly, by Lemma \ref{exact seq with A}} the sequence
$$
1\to \cP^o(\Ql)\to\cG_{g, n+1}[\ell^r](\Ql)\to \cG_{g,n}[\ell^r](\Ql)\to 1
$$
is exact, and by pulling back this sequence along the homomorphism $\tilde\rho_\etabar: \G^\arith_{g,n}[\ell^r]\to \cG_{g,n}[\ell^r](\Ql)$, we obtain an extension 
\begin{equation}\label{nonab open}
1\to \cP^o(\Ql)\to\cE^o_{g,n+1}\to \G^\arith_{g,n}[\ell^r]\to 1
\end{equation}
of $\G^\arith_{g,n}[\ell^r]$ by $\cP^o(\Ql)$. We have the commutative diagram
$$\xymatrix@C=1pc @R=.7pc{
	 		1\ar[r]&\cP^o(\Ql)\ar@{=}[d]\ar[r]&\cE^o_{g,n+1}\ar[r]\ar[d]&\G^\arith_{g,n}[\ell^r]\ar[d]^{\tilde\rho_\etabar}\ar[r]&1\\
		1\ar[r]&\cP^o(\Ql)\ar[r]&\cG_{g, n+1}[\ell^r](\Ql)\ar[r]&\cG_{g,n}[\ell^r](\Ql)\ar[r]&1.	}
	$$	
By the universal property of weighted completion, a continuous section of $\cE^o_{g,n+1}\to \G^\arith_{g,n}[\ell^r]$ induces a section of $\cG_{g, n+1}[\ell^r]\to \cG_{g,n}[\ell^r]$. Thus, the extension (\ref{nonab open}) does not split. 
Consequently, the non-abelian cohomology $H^1_\nab(\G^\arith_{g,n}[\ell^r], \cP^o(\Ql))$ is empty.

\end{document}